\providecommand{\U}[1]{\protect\rule{.1in}{.1in}}
\providecommand{\U}[1]{\protect\rule{.1in}{.1in}}
\newtheorem{theorem}{Theorem}
\newtheorem{corollary}[theorem]{Corollary}
\newtheorem{lemma}[theorem]{Lemma}
\newtheorem{proposition}[theorem]{Proposition}
\newtheorem{remark}[theorem]{Remark}
\newenvironment{proof}{\textbf{Proof:}}{\hfill$\blacksquare$}
\newcommand{\bean}{\begin{eqnarray*}}
\newcommand{\eean}{\end{eqnarray*}}
\newcommand{\benu}{\begin{enumerate}}
\newcommand{\eenu}{\end{enumerate}}
\newcommand{\eea}{\end{eqnarray}}
\newcommand{\bea}{\begin{eqnarray}}
\begin{document}

\author{Manuel Guti\'errez\thanks{Departamento de \'Algebra, Geomtr\'ia y Topolog\'ia,
Universidad de M\'alaga, 29071-M\'alaga (Spain), Email:
\texttt{mgl@agt.cie.uma.es}}, Olaf M\"uller\thanks{Fakult\"at f\"ur
Mathematik, Universit\"at Regensburg, D-93040 Regensburg (Germany), Email:
\texttt{olaf.mueller@ur.de}}}
\title{Compact Lorentzian holonomy \thanks{The first author was supported in part by
MEYC-FEDER Grant MTM2013-41768-P and Junta de Andaluc\'ia research group
FQM-324.}}
\maketitle

\begin{abstract}
We consider (compact or noncompact) Lorentzian manifolds whose holonomy group
has compact closure. This property is equivalent to admitting a parallel
timelike vector field. We give some applications and derive some properties of
the space of all such metrics on a given manifold.

\end{abstract}

\noindent\textit{2010 Mathematical Subject Classification:} 53C50; 53C29.

\noindent\textit{Keywords and phrases: Lorentz manifolds, Globally hyperbolic
space, holonomy, $C^{k}$-fine topology.}

\section{Introduction}

It is well known that a structure group reduction of the frame bundle encodes
the existence of a geometric structure on the manifold. If, moreover, it
contains the holonomy group of a given connection $\nabla$, the geometric
structure is $\nabla$-parallel, \cite[Propositions 5.6 and 7.4]%
{KobayNomiz1963vI}. The most familiar example is the existence of a
semi-Riemannian metric which is equivalent to a reduction of the structure
group to $O_{\nu}(n)$. Metricity of the Levi-Civita connection implies that
its holonomy group is contained in $O_{\nu}(n)$. Another classical example is
a $2n$-dimensional K\"{a}hler manifold. It has holonomy group contained in
$U(n) $. In fact, $U(n)=GL(n,\mathbb{C})\cap Sp(n,\mathbb{R})\cap O(2n)$, and
this means that the manifold has a complex structure and a symplectic
structure which are parallel and adapted to a Riemannian metric.

In (oriented) Riemannian geometry, the generic holonomy is the (special)
orthogonal group, so noncompact (i.e., non-closed) holonomy implies the
presence of a parallel geometric structure. Simply connected Riemannian
manifolds have compact holonomy group because it coincides with its restricted
holonomy group, which is well known to be compact, \cite{BorLic52}. On the
other hand, the question of the existence of a compact Riemannian manifold
with noncompact holonomy was solved in \cite{Wil99} where the author showed
the existence of such manifolds and studied their structures. In fact, a
compact Riemannian manifold with noncompact holonomy has a finite cover that
is the total space of a torus bundle over a compact manifold, and its
dimension is greater or equal than 5.

Lorentzian holonomy groups have attracted much attention in the last years ---
for overview articles on this topic we refer to \cite{hB2012}, \cite{aGtL}.
\cite{aG2015}. Now, the situation in Lorentzian manifolds is similar but
slightly different because the generic holonomy is the Lorentz group which is
noncompact. It is natural to ask the analogous question: can we describe the
Lorentzian manifolds which have compact holonomy?

Noncompactness of the holonomy group is responsible for noncompleteness in
some compact Lorentzian manifods, as in the Clifton-Pohl torus. The
relationship between holonomy and completeness is in general not well known,
see e.g. \cite{LeistnerSchliebner13} were the authors study the case of
compact pp-waves. It is related to undesirable identifications of singular
points in b-singularity theory, \cite{Schm71}. In fact, in \cite{AmoGut99} it
was shown that in the four dimensional Friedmann model of the Universe, which
have noncompact holonomy, big bang and big crunch are the same point in the
b-boundary. On the other hand, compactness of the holonomy group has been used
in \cite{GHP91} to define so-called Cauchy singular boundaries in space-times.
Later, one of the authors (M. G.), using the fundamental observation that both
a Lorentzian metric $g$ and its flip Riemannian metric around a parallel
timelike vector field induce the same Levi-Civita connection, proved that
compactness of the holonomy group implies that the Cauchy singular boundary of
the manifold is homeomorphic to its b-boundary, \cite{Gut09}.

In this article we identify Lorentzian manifolds (compact or not)\ whose
holonomy groups have compact closure (Theorem \ref{Teorema1}), and draw some
consequences. For example, we characterize the case $Hol\subset SO(m-k)$,
$k\in\{1,...,m\}$ (Theorem \ref{ParallelCouple}) and use it to show that in
the category of complete semi-Riemannian manifolds, if they can be decomposed
in a direct product in a weak sense, then it is generically unique between
direct product decomposition (weak or not), and this property fails only in
2-dimensional Minkowski or Euclidean spaces (Theorem \ref{Teorema2}). This is
a rigidity type result that cannot be achieved directly from the uniqueness of
the De Rham-Wu theorem.

Motivated by conceptual questions around the Lorentzian Einstein equation and
led by the characterization given in Theorem \ref{Teorema1}, we then initiate
a study on various topologies on the set $G(M)$ of globally hyperbolic
Lorentzian metrics on a manifold $M$, in topologies induced by usual
topologies on the space $\mathrm{Bil}(M)$ of bilinear forms on $M$. We find
that the closure of $K(M)$, the subset of metrics with precompact holonomy, in
the compact-open topology, consists of metrics with parallel causal vector
fields (see Theorem \ref{closure}). Moreover, as each connected component of
$G(M)$ with the $C^{0}$-fine topology is formed by metrics with diffeomorphic
Cauchy surfaces, we can show that $G(\mathbb{R}^{n})$ with $n\geq4$ has
uncountably many componentes each of which intersects $K(M)$. Finally, if
$g\in K(M)$ is timelike complete, then any other metric in the same $C^{1}%
$-fine path connected component of $K(M)$ is isometric to $g$, Corollaries
\ref{Cauchy} and \ref{Corollary1}.

We wish to thank Thomas Leistner and two anonymous referees for helpful comments on this paper.

\section{Compact holonomy}

We assume all manifolds to be connected. Let $(M,g)$ be a semi-Riemannian
manifold with signature $\nu$. We denote $Hol (M,g)$ and $Hol^{0} (M,g)$ its
holonomy group and its restricted holonomy group, respectively. We drop $M$ if
no confusion is possible. $Hol_{p} (M,g)$ and $Hol^{0}_{p}(M,g)$ will refer to
the resp. holonomy group at a given point (whereas the corresponding objects
without specifying a point are, strictly speaking, only \emph{equivalence
classes of subgroups of $Gl(n)$ or of representations}). Observe that whereas
in the Riemannian case, the reduced holonomy is always a closed subgroup of
$SO(n)$ \cite{BorLic52}, there are examples of simply connected Lorentzian
manifolds with non-closed holonomy group, \cite{BerIke1993}.

The following theorem identifies Lorentzian manifolds with holonomy contained
in a compact group. We use the following well known lemma

\begin{lemma}
\label{Lema1}The map
\[%
\begin{array}
[c]{ccc}%
\pi_{1}(M,p) & \overset{j}{\longrightarrow} & Hol (M,g)/Hol^{0} (M,g)
\end{array}
\]
given by $j([\gamma])=[p_{\gamma}]$ (where $p_{\gamma}$ is the parallel
transport along $\gamma$) is a surjective group morphism.
\end{lemma}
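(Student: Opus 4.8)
The plan is to verify the three required properties in turn---well-definedness, the homomorphism property, and surjectivity---all resting on the standard functoriality of parallel transport together with the characterization of $Hol^{0}(M,g)$ as the subgroup arising from null-homotopic loops. First I would fix the conventions for parallel transport under concatenation and reversal of loops based at $p$, namely $p_{\gamma_{1}*\gamma_{2}}=p_{\gamma_{1}}\ci p_{\gamma_{2}}$ and $p_{\gamma^{-1}}=(p_{\gamma})^{-1}$; both follow from uniqueness of solutions of the parallel transport equation along each segment. Since concatenation of loops descends to the product on $\pi_{1}(M,p)$ and composition of the associated transports descends to the product on $Hol(M,g)$, the assignment $[\gamma]\mapsto[p_{\gamma}]$ is at least formally multiplicative.

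The crucial step, where the real content lies, is well-definedness. Here I would invoke the classical fact (\cite{KobayNomiz1963vI}) that $Hol^{0}(M,g)$ coincides exactly with the set of parallel transports $p_{\gamma}$ taken around null-homotopic loops $\gamma$. Granting this, if $[\gamma_{1}]=[\gamma_{2}]$ in $\pi_{1}(M,p)$, then $\gamma_{1}*\gamma_{2}^{-1}$ is null-homotopic, so $p_{\gamma_{1}}\ci(p_{\gamma_{2}})^{-1}=p_{\gamma_{1}*\gamma_{2}^{-1}}\in Hol^{0}(M,g)$; hence $p_{\gamma_{1}}$ and $p_{\gamma_{2}}$ lie in the same left coset of $Hol^{0}(M,g)$, giving $[p_{\gamma_{1}}]=[p_{\gamma_{2}}]$. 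This shows $j$ is well defined, and combined with the multiplicativity above it is a group morphism. I would note in passing that normality of $Hol^{0}(M,g)$ in $Hol(M,g)$, which is what makes the quotient a group, is itself part of the cited structure theory.

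Finally, surjectivity is immediate from the definition of the holonomy group: every element of $Hol(M,g)$ is by construction of the form $p_{\gamma}$ for some loop $\gamma$ based at $p$, so its class $[p_{\gamma}]$ in the quotient equals $j([\gamma])$. I expect the only genuine obstacle to be the well-definedness step, i.e.\ the identification of $Hol^{0}(M,g)$ with holonomies of null-homotopic loops; everything else is formal bookkeeping with parallel transport. As the lemma is labelled well known, I would cite this characterization rather than reprove it.
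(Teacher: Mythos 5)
Your proof is correct and is the standard argument: the paper states this lemma as ``well known'' and supplies no proof of its own, so there is nothing to compare against, and your three steps (well-definedness via the characterization of $Hol^{0}(M,g)$ as the holonomies of null-homotopic loops, multiplicativity from the concatenation rule, surjectivity from the definition of $Hol(M,g)$) are exactly what is needed. The only points worth making explicit are the ones you already flag, namely the concatenation convention (with the usual orientation $p_{\gamma_{1}*\gamma_{2}}=p_{\gamma_{2}}\circ p_{\gamma_{1}}$ one gets an anti-homomorphism, so the orders must be fixed consistently, as you do --- compare the paper's own convention $P_{\gamma}P_{\beta}=P_{\beta\gamma}$ in the proof of Lemma \ref{Lema3}) and the normality of $Hol^{0}(M,g)$ in $Hol(M,g)$, both correctly attributed to the standard references.
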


The following theorem could be derived as a consequence of the well know
fundamental principle (see \cite[10.19]{Besse1987}), combined with the
classification of subgroups of the Lorentz group, and the above lemma.
However, for later use, we prefer to give a proof using the Haar measure.

\begin{theorem}
\label{Teorema1} Let $(M,g)$ be a time oriented Lorentzian manifold.

\begin{enumerate}
\item The holonomy group is relatively compact (and thereby contained in a
compact group) if and only if it admits a timelike parallel vector field.

\item If $(M,g)$ admits a timelike parallel vector field and $\pi_{1}(M,p)$ is
finite, then its holonomy group is compact.
\end{enumerate}
\end{theorem}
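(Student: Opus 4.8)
The plan is to treat the two parts separately, averaging with the Haar measure where an invariant object must be manufactured (part 1, forward direction) and invoking the \emph{flip metric} together with the Riemannian Borel--Lichnerowicz theorem where relative compactness must be upgraded to compactness (part 2). I begin with the easy implication of (1): if $V$ is a timelike parallel vector field, then every parallel transport fixes $V_{p}$, so $Hol_{p}\subseteq\mathrm{Stab}(V_{p})$, the stabilizer of $V_{p}$ inside the orthogonal group of $(T_{p}M,g_{p})$. Since $V_{p}$ is timelike, $g_{p}$ restricts to a positive-definite form on the hyperplane $V_{p}^{\perp}$, and $\mathrm{Stab}(V_{p})\cong O(V_{p}^{\perp},g_{p}|_{V_{p}^{\perp}})\cong O(n-1)$ is compact. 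Hence $\overline{Hol_{p}}$ is a closed subgroup of a compact group, so it is compact, which is relative compactness.

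For the converse in (1), assume $\overline{Hol_{p}}=:K$ is compact. I would first record that parallel transport preserves the time orientation: a future-pointing unit timelike vector transported around a loop stays unit timelike throughout, so by continuity it cannot cross the light cone, and therefore every element of $Hol_{p}$, and hence of $K$, maps the open future timelike cone $C^{+}\subset T_{p}M$ to itself. Fixing $v_{0}\in C^{+}$ and averaging over the normalized Haar measure $d\mu$ of $K$, I set $\bar v:=\int_{K}k\,v_{0}\,d\mu(k)$. Invariance of Haar measure makes $\bar v$ a $K$-invariant, hence $Hol_{p}$-invariant, vector; and since $\{k\,v_{0}:k\in K\}$ is a compact subset of the open convex cone $C^{+}$, its closed convex hull still lies in $C^{+}$, so the barycenter $\bar v$ is timelike. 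The fundamental holonomy principle then promotes the $Hol_{p}$-invariant vector $\bar v$ to a globally defined parallel (necessarily timelike) vector field, using that $M$ is connected.

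For (2), I would exploit the parallel orthogonal splitting $T M=\mathbb{R}V\oplus V^{\perp}$ by passing to the flip metric $\hat g$, the Riemannian metric that agrees with $g$ on $V^{\perp}$ and reverses the sign of $g$ along $V$. Because $V$ is parallel, $\hat g$ has the same Levi--Civita connection as $g$, hence the same holonomy groups. As $\hat g$ is Riemannian, $Hol^{0}(M,g)=Hol^{0}(M,\hat g)$ is compact by Borel--Lichnerowicz \cite{BorLic52}. By Lemma \ref{Lema1}, finiteness of $\pi_{1}(M,p)$ forces $Hol_{p}/Hol^{0}_{p}$ to be finite, so $Hol_{p}$ is a finite union of cosets $h_{i}\,Hol^{0}_{p}$, each a translate of the compact group $Hol^{0}_{p}$. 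A finite union of compact sets is compact, whence $Hol_{p}$ is compact.

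The step I expect to be the main obstacle is precisely this upgrade in (2): for an indefinite metric the restricted holonomy group need not be closed, so one cannot argue abstractly from relative compactness, and the flip-metric device is exactly what circumvents this by reducing the closedness of $Hol^{0}$ to the Riemannian case. A secondary point demanding care is the timelikeness of the Haar average in (1), which rests on the preservation of time orientation together with the convexity of the future cone; without time-orientation invariance the average could degenerate to a null or spacelike vector.
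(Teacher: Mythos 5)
Your proof is correct and follows essentially the same route as the paper: the Haar-measure average of a future timelike vector over the closure of the holonomy group, using convexity of the future cone, for the converse in (1), and Lemma \ref{Lema1} together with compactness of $Hol^{0}$ for (2). You merely make explicit some points the paper leaves implicit (the stabilizer argument for the easy direction, preservation of time orientation, and the flip-metric reduction to Borel--Lichnerowicz for the compactness of $Hol^{0}$), all of which are consistent with the paper's intended argument.
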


\begin{remark}
\label{R 1}Of course, a parallel timelike vector field induces a local product
splitting of the manifold. But a local or global splitting does not suffice to
imply that the holonomy is contained in a compact group, see Remark
(\ref{R 4}) below for a counterexample.

It is also not true that the holonomy is always compact if there is a parallel
timelike vector field, see Remark (\ref{R 2}).
\end{remark}

\begin{remark}
Note that the proof below does not work in higher signature because the
stabilizer of a nontrivial vector is not compact, so it is not possible to
distinguish a non trivial parallel vector field.
\end{remark}

\begin{remark}
\label{R 2} The hypothesis on the finiteness of $\pi_{1}(M,p)$ is necessary to
ensure compact holonomy. A counterexample are the direct products
$(\mathbb{S}^{1},-dt^{2})\times(T,g_{0})$ and $(\mathbb{R}^{1},-dt^{2}%
)\times(T,g_{0})$ whose holonomy groups equal the one of $(T,g_{0})$ and we
can choose it with noncompact holonomy, as soon as $\vert\pi_{1} (T) \vert=
\infty$.
\end{remark}

\begin{proof}
As $Hol(M,g)$ is contained in a compact subset, its closure $C$, which is a
subgroup as well, is compact, and thus carries a bi-invariant Haar measure
$\mu$. Now let a point $p\in M$ be given. We want to construct a timelike
vector $v\in T_{p}M$ invariant under $Hol_{p}$. To that purpose, choose a
future timelike vector $v_{0}\in T_{p}M$ at will and define $v:=\int_{Hol_{p}%
}h(v_{0})d\mu(h)$. The integral exists as the Haar measure of the compact
group $C$ is finite and the action is continuous. Now, given any $k\in
Hol_{p}$, we compute%
\[
kv=\int_{Hol_{p}}k\circ h(v_{0})d\mu(h)=\int_{Hol_{p}}h(v_{0})k^{\ast}%
d\mu(h)=\int_{Hol_{p}}h(v_{0})d\mu(h)=v,
\]
so indeed $v$ is invariant, and it is timelike, as the integrand consists in
timelike future vectors and those form a convex set. And now, using the
parallel transport $P_{c}$ along a curve $c$, we have that $P_{c}(v)=P_{k}(v)$
if $c(0)=p=k(0)$ and $c(1)=k(1)$, because $P_{k}$ is an isomorphism and
$P_{k}^{-1}\circ P_{c}(v)=P_{ck^{-1}}(v)=v$ as $P_{ck^{-1}}\in Hol_{p}$. Thus
there is a well-defined way to extend $v$ to a parallel future timelike vector
field $V$. The second item follows easily from the fact that $Hol_{0}$ is
connected \cite{BorLic52}, from Lemma \ref{Lema1} and the fact that there is
no finite quotient of a noncompact group by a compact one.
\end{proof}

\vspace{0.5cm}

Let us show a statement adapted to a submanifold. We define,
for a submanifold $S\subset M$, and a point $x\in S$%
\[
Hol(S,M,g):=\{P_{c}\ /\ c(0)=x=c(1),\ c^{\prime}(s)\in TS\ \forall s\in
\lbrack0,1]\}
\]
where $P_{c}$ is the parallel transport w.r.t. the connection of the ambient
manifold $M$, and call $Hol(S,M,g)$ the \emph{relative holonomy of }$S$. The
independence on the point $x$ is up to conjugation, just as in the case of
usual (absolute) holonomy. 

\begin{theorem}
\label{RelativeHolonomy} If $S$ is a spacelike totally geodesic submanifold of a
time-oriented Lorentzian manifold $(M,g)$ and if $Hol (S,M,g)$ (resp. $Hol^{0}
(S,M,g)$) is precompact, then the normal bundle of $S$ contains a section that
is invariant under $Hol (S,M,g)$ (resp. under $Hol^{0} (S,M,g)$).
\end{theorem}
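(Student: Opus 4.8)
The plan is to mimic the Haar-averaging argument from the proof of Theorem \ref{Teorema1}, but carried out fiberwise in the normal bundle rather than in the full tangent space. The key structural fact I would first establish is that, because $S$ is totally geodesic, parallel transport along curves $c$ whose velocities lie in $TS$ preserves the splitting $T_{c(s)}M = T_{c(s)}S \oplus N_{c(s)}S$. Indeed, for a totally geodesic submanifold the second fundamental form vanishes, so the ambient covariant derivative of a section tangent to $S$ (along a curve in $S$) stays tangent to $S$, and hence by orthogonality the same parallel transport preserves the normal bundle. Thus each element of $Hol(S,M,g)$ restricts to a linear isomorphism of the normal fiber $N_xS$, giving a well-defined representation of $Hol(S,M,g)$ on $N_xS$.

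Next I would use the hypothesis that $S$ is spacelike. Since $S$ is spacelike and the ambient signature is Lorentzian with exactly one timelike direction, the metric restricted to $N_xS$ is Lorentzian (the timelike direction of $g$ lives in the normal bundle, as $TS$ is positive definite). So the restricted representation lands in the orthogonal group of a Lorentzian inner product space $N_xS$. Now I would invoke precompactness: since $Hol(S,M,g)$ is precompact in $GL(T_xM)$, its image under restriction to $N_xS$ is a precompact subgroup of $O(N_xS)$, whose closure $C$ is compact and therefore carries a bi-invariant Haar measure $\mu$.

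At this point the argument from Theorem \ref{Teorema1} applies essentially verbatim on the fiber: pick a future timelike $v_0 \in N_xS$ and set $v := \int_C h(v_0)\, d\mu(h)$. The same invariance computation $kv = v$ for all $k \in C$ shows $v$ is fixed by the restricted holonomy, and convexity of the future timelike cone in the Lorentzian space $N_xS$ guarantees $v$ is again timelike (hence nonzero). Finally, exactly as before, I would extend $v$ to a section over all of $S$ by parallel transport along curves in $S$: for two such curves $c,k$ from $x$ to a point $y \in S$, the composition $P_k^{-1}\circ P_c$ lies in $Hol(S,M,g)$ and fixes $v$, so $P_c(v) = P_k(v)$, making the extension well-defined. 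Because parallel transport along $S$-curves preserves $NS$, the resulting section lies in the normal bundle and is by construction invariant under $Hol(S,M,g)$ (resp. $Hol^0(S,M,g)$, using connected curves).

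The main obstacle I anticipate is the very first step: carefully justifying that parallel transport with respect to the ambient connection, taken along curves tangent to $S$, preserves the normal bundle. This is precisely where total geodesy is indispensable, and one must be careful that it is the ambient (not intrinsic) connection being used, so the cleanest route is to show directly that for a section $\xi$ of $TS$ parallel along an $S$-curve and any normal field, the Gauss and Weingarten formulas collapse because the second fundamental form vanishes; everything else is a fiberwise repetition of the Haar-measure argument already in hand.
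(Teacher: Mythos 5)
Your proposal is correct and follows essentially the same route as the paper: both rest on the Haar-averaging argument of Theorem \ref{Teorema1} together with the observation that total geodesy makes $TS$ (and hence the normal bundle) invariant under the relative holonomy. The only cosmetic difference is that you average a future timelike vector directly inside the Lorentzian fiber $N_{x}S$, whereas the paper averages in all of $T_{x}M$ and then takes the normal component $\tilde{V}=V-pr_{TS}^{g}V$ of the resulting invariant timelike vector field.
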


\begin{proof}
The hypothesis on $Hol (S,M,g)$ fixes a temporal vector field $V$ like in the
proof above, which we can assume to be future. As $TS$ is invariant by $Hol (S,M,g)$, 
we know that $W:=pr_{TS}%
^{g}V$ and $\tilde{V} := V - W $ are fixed by $Hol (S,M,g)$ as well. As the
latter is normal, this proves the claim.
\end{proof}

\begin{theorem}
\label{ParallelCouple} Let $(M,g)$ be an oriented and time oriented
$m$-dimensional Lorentzian manifold. Then, $Hol\subset SO(m-k)$,
$k\in\{1,...,m\}$ if and only if $M$ admits an orthonormal system
$\{V_{1},V_{2},...,V_{k}\}$ formed by parallel vector fields, with $V_{1}$ timelike.
\end{theorem}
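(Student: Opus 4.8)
The plan is to exploit the holonomy principle in both directions: a vector at $p$ is fixed by the full holonomy group $Hol_p$ if and only if it extends to a globally defined parallel vector field, by exactly the parallel-transport argument used in the proof of Theorem \ref{Teorema1} (if $v$ is $Hol_p$-fixed then $P_c(v)$ depends only on the endpoint of $c$, since $P_{c'}^{-1}P_c(v)=P_{c'^{-1}c}(v)=v$). The two implications then reduce to linear algebra on $T_pM$ together with the fact that parallel transport is an isometry.

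For the direction assuming an orthonormal parallel system $\{V_1,\dots,V_k\}$ with $V_1$ timelike, I would argue as follows. These fields span a parallel $k$-dimensional distribution $F$ on which $g$ restricts to a form of signature $(1,k-1)$, and since each $V_i$ is parallel, $Hol_p$ fixes $F_p$ pointwise. The orthogonal complement $E_p:=F_p^{\perp}$ is then $(m-k)$-dimensional, spacelike (the lone timelike direction lies in $F_p$), and $Hol_p$-invariant, because parallel transport preserves $g$ and hence orthogonality. Thus $Hol_p$ acts on $(E_p,g_p)$ by isometries of a positive definite form, giving $Hol\subset O(m-k)$; orientability of $M$ forces $Hol\subset SO_1(m)$, and since the action on $F_p$ is trivial the determinant on $E_p$ equals the total determinant, which is $1$, so in fact $Hol\subset SO(m-k)$.

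For the converse, suppose $Hol\subset SO(m-k)$. Here $SO(m-k)$ sits inside the Lorentz group as the special orthogonal group of a spacelike $(m-k)$-dimensional subspace $E_p\subset T_pM$, fixing the complementary $k$-dimensional subspace $F_p=E_p^{\perp}$ pointwise. Because $SO(m-k)$ is compact and acts definitely on $E_p$, the overall signature $(1,m-1)$ forces $E_p$ to be positive definite and hence $F_p$ to have signature $(1,k-1)$; in particular $F_p$ contains a timelike vector. Equivalently, compactness of $SO(m-k)$ together with Theorem \ref{Teorema1} already supplies a timelike parallel field, which must lie in $F_p$. Choosing an orthonormal basis $\{v_1,\dots,v_k\}$ of $F_p$ with $v_1$ timelike, each $v_i$ is fixed by the full holonomy and so extends, by the holonomy principle above, to a parallel vector field $V_i$; since parallel transport is an isometry, the family $\{V_1,\dots,V_k\}$ stays orthonormal globally, with $V_1$ timelike.

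I expect the main obstacle to be the signature bookkeeping in the converse: one must verify that the holonomy-fixed complement $F_p$ genuinely carries the timelike direction, so that $V_1$ can be taken timelike rather than merely obtaining $k$ spacelike parallel fields. This is exactly where compactness of $SO(m-k)$ is essential — it pins the timelike axis into the fixed subspace — and it is the point at which invoking Theorem \ref{Teorema1} most cleanly streamlines the argument.
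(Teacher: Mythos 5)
Your proof is correct and follows essentially the same route as the paper: both directions rest on the holonomy principle (holonomy-fixed vectors extend to parallel fields, by the parallel-transport argument from Theorem \ref{Teorema1}) together with the observation that the standard copy of $SO(m-k)$ in the Lorentz group pointwise fixes a $k$-dimensional subspace of signature $(1,k-1)$, while conversely the span of the parallel fields is fixed and $Hol$ acts isometrically, with determinant one, on its spacelike orthogonal complement. The only organizational difference is that the paper runs the forward implication as an induction on $k$ (the case $k=1$ coming from Theorem \ref{Teorema1} and the maximality of the compact conjugacy class of $SO(m-1)$ in $SO(1,m-1)$), whereas you produce the whole orthonormal basis of the fixed timelike $k$-plane in one step; your explicit signature bookkeeping for why that fixed subspace contains the timelike direction is, if anything, slightly more careful than the paper's.
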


\begin{proof}
It is clear that it is true for $k=1$ after Theorem \ref{Teorema1} and as the
conjugacy class of $SO(m-1) $ is the maximal compact conjugacy class of
subgroups of $SO(1,m-1)$. Suppose it is true for $k-1$. If $\{e_{1}%
,e_{2},...,e_{k-1}\}$ is an ortonormal system in $T_{p}M$ with $e_{i}%
=V_{i}(p)$, then, $Hol$ keeps $V_{1},...,V_{k-1}$ invariant and $SO(m-k)$ is
the stabilizer of a timelike $k $-dimensional subspace $L^{k}$ of $T_{p}M$
containing the above system$\{e_{1},e_{2},...,e_{k-1}\}$. We can complete this
system to an orthonormal basis of $L^{k}$ choosing a spacelike vector
$e_{k}\in L^{k}$. This induce an $Hol$ invariant vector field $V_{k}$ showing
that the theorem is true for $k$.

Conversely, let $U\subset T_{p}M$ be the subspace generated by $\{V_{1}%
,V_{2},...,V_{k}\}_{p}$. Each element $h\in Hol$ decomposes as%
\[
id\oplus h_{2}:U\oplus U^{\bot}\rightarrow U\oplus U^{\bot}%
\]
where $h_{2}$ acts as an isometry on $U^{\bot}$.
\end{proof}

\begin{remark}
\label{R 4}The existence of a Lorentzian manifold with a timelike parallel
vector field $V$ and $Hol$ noncompact is clear in the noncompact case because
we turn the question into a well-known Riemannian one using the flip metric%

\begin{equation}
g_{R}(X,Y)=g(X,Y)+2g(X,V)g(Y,V) \label{MetricaRiemann}%
\end{equation}
with $g(V,V)=-1$ (and thus $g_{R}(V,V) =1$) which share the Levi-Civita
connection with the Lorentzian metric $g $. The compact case is more involved,
but it can be solved using the results in \cite{Wil99}. We consider three cases

\begin{itemize}
\item $\dim M\geq6$, the above example $M=\mathbb{S}^{1}\times T$ with $T$
compact and $Hol (T)$ noncompact shows that they exist, but in this case we
know that $\dim T\geq5$.

\item $\dim M\leq4$, the presence of a parallel timelike vector field allows
us construct the Riemannian flip metric on $M$, and this implies that the
holonomy is compact.

\item $\dim M=5$. We can not apply neither of the above direct arguments, but
the Wilking example provides one.

Consider a semidirect product $S=\mathbb{R}^{4}\rtimes\mathbb{R}$, and a
discrete cocompact subgroup $\Lambda$ which acts as deck transformation group
of the covering $p:S\rightarrow\Lambda\backslash S$ by left translations. The
group $S$ admits a left invariant metric $g=\left\langle \cdot,\cdot
\right\rangle \times g_{1}$ where $\left\langle \cdot,\cdot\right\rangle $ is
the euclidean metric in $U=\mathbb{R}^{2}\times\{0\}$ and $g_{1}$ is a left
invariant metric on $S_{1}=\{0\}\times\mathbb{R}^{2}\rtimes\mathbb{R}$. The
Wilking example is the quotient $\Lambda\backslash S$ with the induced metric
with the choice of two parameters. It has non compact holonomy group. The left
invariant vector field $V\in\mathfrak{X}(S)$ defined by $(1,0,0,0,0)\in
\mathfrak{s}$, where $\mathfrak{s}$ is the Lie algebra of $S$, is invariant by
$\Lambda$ so it defines a vector field $V\in\mathfrak{X}(\Lambda\backslash
S)$. It is clear that both vector fields are parallel.

Using the flip metric in (\ref{MetricaRiemann}) we get a Lorentzian metric on
a compact manifold $\Lambda\backslash S$ with $V$ a timelike parallel vector
field and non compact holonomy.
\end{itemize}
\end{remark}

As an application of Theorem \ref{Teorema1}, we see directly that some kind of
manifolds do not admit Lorentzian metrics with relatively compact holonomy,
for example odd spheres (which \emph{do} admit Lorentzian metrics because of
vanishing Euler number but which are not direct products).

We compare the holonomy groups in a covering space. Let $\pi:M\longrightarrow
B$ be a semi-Riemannian covering, so both $M$ and $B$ have the same restricted
holonomy group.

\begin{lemma}
\label{Lema3}Let $\pi:M\longrightarrow B$ be a semi-Riemannian covering map.

\begin{enumerate}
\item The map $\pi^{\#}:Hol (M)\longrightarrow Hol (B)$ given by $\pi
^{\#}(P_{\gamma})=P_{\pi\circ\gamma}$, is a Lie group monomorphism.

\item If $Hol (B)$ is compact, then $Hol (M)$ is also compact.

\item If $Hol (M)$ is compact and $\pi_{1}(B,p)$ is finite, then $Hol (B)$ is
also compact.
\end{enumerate}
\end{lemma}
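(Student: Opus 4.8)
The plan is to prove the three items of Lemma \ref{Lema3} in order, exploiting the fact that a covering map is a local isometry, so parallel transport commutes with the projection $\pi$.

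For item (1), I would first check that $\pi^{\#}$ is well defined and a homomorphism. Since $\pi$ is a local isometry, its differential intertwines the Levi-Civita connections, so for a loop $\gamma$ based at $q\in M$ with $\pi(q)=p$, the parallel transport satisfies $d\pi_q\circ P_\gamma = P_{\pi\circ\gamma}\circ d\pi_q$ on $T_qM$. Identifying $T_qM$ with $T_pB$ via the isomorphism $d\pi_q$, the assignment $P_\gamma\mapsto P_{\pi\circ\gamma}$ is thus literally the same linear map read in the corresponding basis, which immediately gives that $\pi^{\#}$ is a group homomorphism into $Hol(B)$. Injectivity then amounts to the observation that if $P_{\pi\circ\gamma}=\mathrm{id}$, then $P_\gamma=\mathrm{id}$ by the same intertwining identity, since $d\pi_q$ is an isomorphism; hence $\ker\pi^{\#}$ is trivial. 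That $\pi^{\#}$ is a morphism of Lie groups (smooth) follows because both holonomy groups are Lie subgroups of the orthogonal group and the map is the restriction of the identity under the identification $d\pi_q$, so it is actually an inclusion of subgroups up to conjugation; I would state this cleanly as ``$\pi^{\#}$ realizes $Hol(M)$ as a subgroup of $Hol(B)$.''

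Item (2) is then almost immediate: a monomorphism of Lie groups $\pi^{\#}:Hol(M)\hookrightarrow Hol(B)$ realizes $Hol(M)$ as a closed subgroup, and a closed subgroup of a compact group is compact, so compactness of $Hol(B)$ forces compactness of $Hol(M)$. I would be slightly careful to confirm that the image is closed --- this is where I expect the one genuine subtlety to lie, since an abstract injective Lie group homomorphism need not have closed image; but here the image is exactly those $P_{\pi\circ\gamma}$ arising from loops in $B$ that lift to loops in $M$, i.e. $\pi^{\#}(Hol(M))$ is the subgroup corresponding to $\pi_*\pi_1(M)\le\pi_1(B)$ via the morphism $j$ of Lemma \ref{Lema1}, and this is the point I would pin down to guarantee closedness.

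For item (3), I would combine the covering relation on fundamental groups with the finite-quotient argument already used in Theorem \ref{Teorema1}. Since $M\to B$ is a covering, $\pi_*\pi_1(M)$ has finite index in $\pi_1(B)$ (as $\pi_1(B)$ is finite by hypothesis), hence $\pi^{\#}(Hol(M))$ has finite index in $Hol(B)$. Using Lemma \ref{Lema1}, the restricted holonomy $Hol^0$ is common to $M$ and $B$ and has finite index in each, so both quotients by $Hol^0$ are finite; a compact $Hol(M)$ together with the finiteness of $Hol(B)/\pi^{\#}(Hol(M))$ then shows $Hol(B)$ is a finite union of compact cosets, hence compact. The main obstacle throughout is the bookkeeping of closedness and the identification of images under $\pi^{\#}$ with the index-theoretic data from Lemma \ref{Lema1}; the geometry (intertwining of parallel transport by a local isometry) is routine, and I would present it briefly.
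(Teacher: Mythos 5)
Your proposal is correct and follows essentially the same route as the paper: the intertwining relation $d\pi_q\circ P_\gamma=P_{\pi\circ\gamma}\circ d\pi_q$ for item (1), and for items (2) and (3) the combination of $Hol^0(M)=Hol^0(B)$ being compact with the finiteness of the number of components (via Lemma \ref{Lema1} and the finiteness of $\pi_1$). The closedness subtlety you flag in item (2) is resolved exactly by the fact you name --- the image contains $Hol^0(B)$, so it is an open, hence closed, subgroup --- which is the same mechanism the paper uses when it counts components of $Hol(B)$.
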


\begin{proof}
Observe that $P_{\gamma}P_{\beta}=P_{\beta\gamma}$ and $P_{\pi(\beta\gamma
)}=P_{\pi(\gamma)}P_{\pi(\beta)}$ for any couple of lassos $\gamma,\beta$ at
$p$. On the other hand, $\pi$ is a local isometry so $\pi_{\ast p}P_{\gamma
}=P_{\pi(\gamma)}\pi_{\ast p}$. Thus if $P_{\gamma}=P_{\beta}\in Hol^{M}$, we
have $e=P_{\beta^{-1}\gamma}$ being $e$ the identity element, and applying
$\pi_{\ast_{p}}$ implies that $P_{\pi(\gamma)}=P_{\pi(\beta)}$. This shows
that $\pi^{\#}$ is well defined.

\begin{enumerate}
\item[1.] We see that it is a morphism using $P_{\gamma}P_{\beta}%
=P_{\beta\gamma}$ and $P_{\pi(\beta\gamma)}=P_{\pi(\gamma)}P_{\pi(\beta)}$. To
see that it is injective use $\pi_{\ast p}P_{\gamma}=P_{\pi(\gamma)}\pi_{\ast
p}$.

\item[2.] Observe that $Hol^{0} (B)=Hol^{0} (M)$ is compact, so the connected
components of $Hol (M)$ are diffeomorphic to $Hol^{0} (B)$ and $Hol (M)$
itself can be identified to its image by $\pi^{\#}$ in $Hol (B)$. Finally,
$Hol (B)$ has a finite number of connected components because it is compact.

\item[3.] Note that the hypothesis implies $\#Hol (B)/Hol^{0} (B)<\infty$ by
Lemma \ref{Lema1}, and $Hol^{0} (B)=Hol^{0} (M)$ is compact, thus $Hol (B)$ is
also compact.
\end{enumerate}
\end{proof}

Given $u,v\in T_{p}M$ where $u$ is a null vector, it is defined the null
sectional curvature of the degenerate plane $\pi=span\{u,v\}$ as%
\[
\mathcal{K}_{u}(\pi)=\frac{g(R_{uv}v,u)}{g(v,v)}.
\]

It depends on the null vector $u\in T_{p}M$, but once it is fixed, it is a map
on degenerate planes in $T_{p}M$ containing $u$. If we fix a timelike vector
field $U$, we can see $\mathcal{K}_{U}$ as a map on the subset of degenerate
planes in the Grassmannian of planes in $TM$, defining $\mathcal{K}_{U}%
(\pi):=\mathcal{K}_{u}(\pi)$ where $u\in\pi$ is the unique null vector such
that $g(u,U)=1$. There are examples where $\mathcal{K}_{U}$ is in fact a map
from $M$, that is, it does not depend on the choice of degenerate plane
$\pi\subset T_{p}M$ but just on the point $p$ itself. In this case we say that
it is a pointwise function. It is a strong condition, in some sense similar to
the well known Schur Lemma in the Riemannian case, see \cite{Harris1985} for
details. The sign of $\mathcal{K}_{U}$ does not depend on the chosen vector
field $U$. In fact, if $U^{\prime}$ is another timelike vector field, and
$u\in T_{p}M$ is the unique null vector in $\pi$ with $g(u,U)=1$,
\[
\mathcal{K}_{U}(\pi)=g(u,U^{\prime})^{2}\mathcal{K}_{U^{\prime}}(\pi).
\]

So it is reasonable to speak of positive null sectional curvature for all
degenerate planes, \cite{Har82}.

The following result shows that null curvature can help to determine a
Lorentzian holonomy.

\begin{proposition}
Let $(M,g)$ be a complete and non-compact Lorentzian manifold with $m=\dim
M\geq4$ such that the null sectional curvature is a positive pointwise
function. If the holonomy group is contained in a compact group, then
$Hol(M)=SO(m-1)$ or $O(m-1)$.
\end{proposition}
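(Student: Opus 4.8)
The plan is to reduce the problem to Riemannian geometry via the flip metric and to show that positive pointwise null sectional curvature forces the spacelike factor to have constant positive curvature, whence its holonomy is the full special orthogonal group.

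First I would invoke Theorem \ref{Teorema1}: since $Hol$ is relatively compact, there is a parallel timelike vector field $V$, which I normalize so that $g(V,V)=-1$ and take as the reference field $U=V$. Because $V$ is parallel, the curvature tensor satisfies $R(V,\cdot)=R(\cdot,V)=0$, the distribution $V^{\perp}$ is parallel and totally geodesic, and the flip metric $g_{R}$ of (\ref{MetricaRiemann}) shares the Levi-Civita connection --- hence the curvature tensor $R$ and the holonomy group --- with $g$. Since $g$ and $g_{R}$ have the same geodesics, geodesic completeness of $(M,g)$ yields geodesic completeness of $g_{R}$, hence, by Hopf--Rinow, metric completeness of $(M,g_{R})$.

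The computational heart, and the step I expect to be the main obstacle, is the identity $\mathcal{K}_{U}(\pi)=K_{R}(\bar\pi)$, relating the null sectional curvature of a degenerate plane $\pi$ at $p$ to the $g_{R}$-sectional curvature of an associated $2$-plane $\bar\pi\subset V^{\perp}_{p}$. To set this up I write the normalized null vector of $\pi$ as $u=-V+w$ with $w\in V^{\perp}$, $g(w,w)=1$ (so that $g(u,U)=1$), and the second spanning vector, which degeneracy forces to be $g$-orthogonal to $u$, as $v=\alpha V+v'$ with $v'\in V^{\perp}$ and $\alpha=-g(v',w)$. Using $R(V,\cdot)=R(\cdot,V)=0$ repeatedly, both the numerator $g(R_{uv}v,u)=g(R_{wv'}v',w)$ and the denominator $g(v,v)=g(v',v')-g(w,v')^{2}$ reduce to purely spacelike quantities; their quotient is precisely the $g_{R}$-sectional curvature of $\bar\pi:=\mathrm{span}\{w,v'\}$, since that curvature carries the same denominator $g(v',v')-g(w,v')^{2}$. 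The care here is bookkeeping with the normalization $g(u,U)=1$ (the null sectional curvature being quadratic in the scaling of $u$) and checking that as $\pi$ runs over all degenerate planes through $p$, the plane $\bar\pi$ runs over all $2$-planes of $V^{\perp}_{p}$.

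Granting the identity, the hypothesis that $\mathcal{K}_{U}$ is a positive pointwise function translates into: at each point the $g_{R}$-sectional curvature of $V^{\perp}$ is isotropic and positive. Passing to the universal cover $\tilde M$ (complete, simply connected, with lifted parallel $\tilde V$) and applying the de Rham theorem to $(\tilde M,\tilde g_{R})$ splits it as $\R\times\tilde N$ with $\tilde N$ complete of dimension $m-1\ge 3$; Schur's lemma then upgrades pointwise isotropy to a single positive constant, so $\tilde N$ is a round sphere $S^{m-1}$ and $Hol^{0}(M)=Hol(\tilde N)=SO(m-1)$. (Alternatively one reads this off directly from Ambrose--Singer: isotropy at each point forces $R|_{V^{\perp}}=c\,(\cdot\wedge\cdot)$, and the operators $w\wedge v'$ span $\mathfrak{so}(m-1)$.) Finally the full group $Hol(M)$ preserves $V$ and acts by $g$-isometries on the positive-definite $V^{\perp}$, so $Hol(M)\subseteq O(m-1)$; since it also contains $Hol^{0}(M)=SO(m-1)$, which has index two in $O(m-1)$, it equals $SO(m-1)$ or $O(m-1)$, as claimed.
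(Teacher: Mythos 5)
Your proof is correct in substance but takes a genuinely different route from the paper's. The paper's argument is essentially a reduction to a citation: after passing to a suitable finite orientable and time-orientable covering $\widetilde M$ and using Lemma \ref{Lema3} together with Theorem \ref{Teorema1} to produce the parallel timelike field there, it invokes \cite[Proposition 5.4]{GutOle09} (parallel fields as geodesic reference frames) to conclude that $\widetilde M$ is a product $\mathbb{R}\times L$ with $L$ a quotient of the round sphere $\mathbb{S}^{m-1}$, whence $SO(m-1)\subset Hol(\widetilde M)\subset Hol(M)\subset O(m-1)$. You instead reprove the relevant special case of that decomposition from scratch: the flip-metric reduction, the identity $\mathcal{K}_U(\pi)=K_R(\bar\pi)$ --- your computation is right, since $R(V,\cdot)=R(\cdot,V)=0$ kills every term involving the $V$-components and the assignment $\pi\mapsto\bar\pi$ is onto the $2$-planes of $V_p^{\perp}$ --- followed by the de Rham splitting of the universal cover and Schur's lemma (which is where $m\geq 4$ enters). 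Your version buys self-containedness and makes visible exactly where each hypothesis is used; the paper's buys brevity at the price of an external reference. Two places where the paper is more careful than you: first, the proposition does not assume $(M,g)$ time-oriented, so before invoking Theorem \ref{Teorema1} one should pass to a finite (time-)orientation covering and use Lemma \ref{Lema3} to transport the holonomy statements back down to $M$ --- your closing step ``$Hol(M)$ preserves $V$'' needs $V$ to exist on $M$ itself, not only on a cover; second, both you and the paper read ``pointwise'' as being with respect to the parallel reference frame $V$, which is the intended reading but worth making explicit, since (unlike the sign) the pointwise property of the null sectional curvature is not independent of the choice of reference frame.
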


\begin{proof}
A suitable finite covering $\widetilde{M}$ of $M$ is orientable and time
orientable, so Lemma \ref{Lema3} and Theorem \ref{Teorema1} tell us that
$\widetilde{M}$ admits a timelike parallel vector field $U$. In particular, we
can use $U$ as a geodesic reference frame in the sense of \cite{GutOle09} to deduce that $\widetilde{M}$ is a
direct product $\mathbb{R}\times L$ where the second factor is a quotient of
the usual sphere $\mathbb{S}^{m-1}$ of constant positive curvature, see
\cite[Proposition 5.4]{GutOle09}. The fact that $L$ is a quotient of
$\mathbb{S}^{m-1}$ and Lemma \ref{Lema3} again implies $SO(m-1)\subset
Hol(\widetilde{M})\subset Hol(M)$. By hypothesis, $Hol(M)$ is contained in a
compact group, in particular in a maximal compact one, that is, in a copy of
$O(m-1)$.
\end{proof}

Let us consider another consequence of Theorem \ref{Teorema1}. It is a
well-known result by Marsden \cite{Marsden1972} that a compact homogeneous
semi-Riemannian manifold is geodesically complete (whereas the same is not
true omitting the condition of homogeneity). Using a suitable covering and
\cite{RS}, we have

\begin{corollary}
Let $M$ be a compact manifold and let $g$ be a Lorentzian metric on $M$ with
precompact holonomy. Then $(M,g)$ is geodesically complete.
\end{corollary}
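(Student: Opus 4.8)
The plan is to reduce to the compact Riemannian case by means of the flip metric of Remark~\ref{R 4}, after passing to a cover on which Theorem~\ref{Teorema1} applies. First I would address the fact that $M$ need not be time orientable: if it is not, I replace it by its time orientation double cover $\pi:\widetilde{M}\to M$ (and if it already is, I take $\widetilde{M}=M$). In either case $\pi$ is a finite covering and a local isometry for $\widetilde{g}:=\pi^{*}g$, so $\widetilde{M}$ is again compact, and it is time oriented by construction. By part~1 of Lemma~\ref{Lema3} the map $\pi^{\#}:Hol(\widetilde{M})\to Hol(M)$ is a monomorphism; hence $Hol(\widetilde{M})$ is isomorphic to a subgroup of $Hol(M)$, whose closure is compact, and its own closure, lying inside that compact set, is compact as well. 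Thus $\widetilde{M}$ has precompact holonomy.

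Now $(\widetilde{M},\widetilde{g})$ is a compact, time oriented Lorentzian manifold with precompact holonomy, so Theorem~\ref{Teorema1} yields a parallel timelike vector field $V$ on $\widetilde{M}$; as $V$ is parallel, $\widetilde{g}(V,V)$ is constant and I normalize it to $-1$. I then form the flip metric
\[
g_{R}(X,Y)=\widetilde{g}(X,Y)+2\,\widetilde{g}(X,V)\,\widetilde{g}(Y,V),
\]
which is positive definite --- splitting $X=aV+X^{\perp}$ with $X^{\perp}\perp V$ gives $g_{R}(X,X)=a^{2}+\widetilde{g}(X^{\perp},X^{\perp})\geq0$ --- and which, by Remark~\ref{R 4}, has the \emph{same} Levi-Civita connection as $\widetilde{g}$.

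The crucial observation is that two metrics sharing a Levi-Civita connection have exactly the same parametrized geodesics, since the geodesic equation $\nabla_{\dot{c}}\dot{c}=0$ only involves the connection. As $(\widetilde{M},g_{R})$ is a compact Riemannian manifold it is geodesically complete by Hopf--Rinow, so every $g_{R}$-geodesic, and hence every $\widetilde{g}$-geodesic, is defined on all of $\mathbb{R}$; that is, $(\widetilde{M},\widetilde{g})$ is complete. (This is the non-homogeneous analogue of Marsden's theorem; one may also invoke \cite{RS} directly, a compact Lorentzian manifold with a timelike Killing --- in particular parallel --- vector field being complete.) Finally I push completeness down the covering: a maximal $g$-geodesic $c$ of $M$ lifts through $\pi$ to a $\widetilde{g}$-geodesic, which extends to all of $\mathbb{R}$ by completeness of $\widetilde{M}$, and projecting the extension shows $c$ was already defined on all of $\mathbb{R}$. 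Hence $(M,g)$ is geodesically complete.

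The main obstacle is not any single deep step but the coordination of the covering: one must choose $\widetilde{M}$ so that it is \emph{simultaneously} time oriented (to apply Theorem~\ref{Teorema1}), compact (to apply Hopf--Rinow or \cite{RS}), and of precompact holonomy, and then verify that geodesic completeness genuinely transfers back across the local isometry $\pi$. The whole argument hinges on the fact that the flip metric leaves the connection unchanged, which is precisely what allows completeness to migrate between the Riemannian and Lorentzian settings.
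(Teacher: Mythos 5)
Your proof is correct, and it fills in exactly the step the paper leaves implicit (``using a suitable covering''): passing to the time-orientation double cover, transferring precompactness of the holonomy via $\pi^{\#}$, and invoking Theorem \ref{Teorema1} to produce the parallel timelike field. Where you diverge from the paper is in the completeness mechanism itself: the paper simply cites Romero--S\'anchez \cite{RS} (a parallel timelike field is in particular a timelike conformal Killing field, so their theorem on compact Lorentz manifolds applies), whereas your main line of argument is the flip metric of Remark \ref{R 4} together with Hopf--Rinow and the fact that metrics sharing a Levi-Civita connection share their parametrized geodesics. Your route is more elementary and self-contained --- it uses only ingredients already present in the paper --- while the paper's route outsources the hard step to \cite{RS}, which covers the strictly more general conformal Killing case; you correctly note both options. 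One small point worth making explicit: to conclude that $Hol(\widetilde{M})$ is precompact you need more than that $\pi^{\#}$ is an \emph{abstract} monomorphism --- you need that it is realized as conjugation by the linear isomorphism $\pi_{\ast p}$ (the identity $\pi_{\ast p}P_{\gamma}=P_{\pi\circ\gamma}\pi_{\ast p}$ from the proof of Lemma \ref{Lema3}), so that closures are carried to closures; this is immediate from the paper's proof of that lemma but deserves a word. Everything else, including the descent of completeness along the finite covering, is sound.
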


This result is a particular case of a more general result valid for holonomy
groups defined for an arbitrary connection, see \cite{Ake2016}.

Inspired in \cite{GutOle12}, we can show that Euclidean and Minkowski plane
are the unique semi-Riemannian manifolds with the property that they admit
another direct product decomposition with non degenerate properties.

A semi-Riemannian manifold is called \emph{(locally) indecomposable} iff its
holonomy group is \emph{weakly irreducible}, i.e. iff the latter does not
admit non-trivial and nondegenerate invariant subspaces by the holonomy group
in any tangent space. In the Riemannian case this notion coincides with the
usual notion of irreducibility. Given a\ manifold $M=M_{1}\times M_{2}$, we
call $M_{i}(p)$ the tangent space at $p$ of the leaf of the $i$-th canonical
foliation through $p\in M$.

\begin{theorem}
\label{Teorema2}Let $M=M_{1}\times M_{2}$ be a complete semi-Riemannian direct
product with $M_{i}$ indecomposable. Suppose that $M$ admits another
decomposition as a direct product $M=L_{1}\times L_{2}$ (with $L_{1}%
\not =M_{i}$), and $M_{i}(p)\cap L_{j}(p)$ zero or non degenerate. Then
$M=\mathbb{R}^{2}$, the euclidean or Minkowski plane.
\end{theorem}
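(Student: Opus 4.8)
The plan is to work infinitesimally at a base point and convert both decompositions into holonomy-invariant splittings, reserving completeness for the very end. Fix $p\in M$, put $H:=Hol_{p}(M,g)$, $V_{i}:=M_{i}(p)$, $W_{j}:=L_{j}(p)$. A semi-Riemannian direct product carries the product connection, so $T_{p}M=V_{1}\oplus V_{2}$ and $T_{p}M=W_{1}\oplus W_{2}$ are two orthogonal, non-degenerate, $H$-invariant splittings, and $H=H_{1}\times H_{2}$ with $H_{i}=Hol_{p}(M_{i})$ acting on $V_{i}$ and trivially on $V_{j}$, $j\neq i$; that $M_{i}$ is indecomposable means precisely that $H_{i}$ acts weakly irreducibly on $V_{i}$. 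The first step is an intersection lemma: each $V_{i}\cap W_{j}$ is $H$-invariant, hence $H_{i}$-invariant inside $V_{i}$, and by hypothesis zero or non-degenerate, so weak irreducibility forces $V_{i}\cap W_{j}\in\{0,V_{i}\}$ for all $i,j$. Thus each $V_{i}$ is either contained in exactly one $W_{j}$ (as $W_{1}\cap W_{2}=0$) or transverse to both. If both $V_{1},V_{2}$ were contained in some $W_{j}$, a dimension count yields either a vanishing $W_{j}$ or $\{W_{1},W_{2}\}=\{V_{1},V_{2}\}$, contradicting that $L_{1}\times L_{2}$ is a genuinely different non-trivial decomposition with $L_{1}\neq M_{i}$; hence at least one factor, say $V_{1}$, is transverse to both $W_{1}$ and $W_{2}$.

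The second step is to exploit the rigidity of a transverse factor. The projections $P_{j}\colon T_{p}M\to W_{j}$ and $q_{i}\colon T_{p}M\to V_{i}$ all commute with $H$, and transversality of $V_{1}$ makes $P_{1}|_{V_{1}}$ and $P_{2}|_{V_{1}}$ injective (their kernels are $V_{1}\cap W_{2}$ and $V_{1}\cap W_{1}$). Comparing these with $q_{i}$ restricted to the $W_{j}$, whose kernels are again pinned down by the intersection lemma, the equivariance forces the $H$-representation on $V_{1}$ to be \emph{trivial}: either because the transverse $W_{j}$ turns out to be pointwise $H$-fixed, or because $V_{1}$ is realised as the graph of an equivariant map whose image lies in the $H_{2}$-fixed subspace $W_{2}^{H_{2}}$. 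A weakly irreducible trivial module is one-dimensional, so $\dim V_{1}=1$ and $M_{1}\cong\mathbb{R}$. I would then re-run the analysis on $V_{2}$: if $V_{2}$ is also transverse the same graph argument makes its holonomy trivial, whereas if $V_{2}\subseteq W_{j}$ the equality $\dim V_{1}=1$ leaves no dimensional room except $\{W_{1},W_{2}\}=\{V_{1},V_{2}\}$ or a trivial factor, both excluded. In every surviving case $\dim M=2$ and $H$ acts trivially, i.e. $(M,g)$ is flat.

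The final step is globalisation. A complete flat surface carrying a non-trivial metric product structure is, up to isometry, $\mathbb{R}^{2}$, a flat cylinder, or a flat quotient with a compact factor; among these only $\mathbb{R}^{2}$ admits a \emph{second} product decomposition whose leaves are transverse to the first. Indeed a tilted parallel line field on a manifold with an $S^{1}$-factor cannot produce complete line leaves arranged as a global product (equivalently $S^{1}\times\mathbb{R}$ and the flat tori are not isometric to $\mathbb{R}\times\mathbb{R}$, and the relevant lattices admit no orthogonal, non-degenerate re-splitting). Using completeness to pass from the infinitesimal splittings to honest product decompositions, I conclude $M\cong\mathbb{R}^{2}$ with the Euclidean or Minkowski metric, according to the signature.

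I expect the second step to be the main obstacle, precisely because of the genuinely semi-Riemannian phenomenon that weak irreducibility permits invariant \emph{degenerate} subspaces (parallel null directions), so that neither Schur's lemma nor the uniqueness clause of the de Rham--Wu theorem applies directly. The non-degeneracy hypothesis on $M_{i}(p)\cap L_{j}(p)$ is exactly the input that removes these degenerate configurations, since it is what makes the intersection lemma collapse every $V_{i}\cap W_{j}$ to $0$ or $V_{i}$; the delicate bookkeeping lies in tracking which of the equivariant projections remain injective. A secondary obstacle is the globalisation, where completeness must be used both to realise the parallel distributions as genuine product factors and to rule out the complete flat quotients with a circle factor.
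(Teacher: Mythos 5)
Your plan is the same as the paper's: both rest on the observation that $V_{i}\cap W_{j}$ is $H$-invariant and, by hypothesis, zero or non-degenerate, so weak irreducibility forces $V_{i}\cap W_{j}\in\{0,V_{i}\}$; both then show $H$ is trivial, conclude $\dim M=2$ and flatness, and finish by classifying complete flat surfaces with two product structures. The one genuine gap is exactly the step you single out as the main obstacle: you assert that transversality of $V_{1}$ to both $W_{j}$ makes the $H$-action on $V_{1}$ trivial, but neither of the two mechanisms you sketch is actually carried out, and the second one (realising $V_{1}$ as a graph into $W_{2}^{H_{2}}$) does not obviously lead anywhere. The missing argument is two lines and is what the paper does. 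Write $h\in H_{1}$ as $c\oplus\mathrm{id}$ with respect to $V_{1}\oplus V_{2}$. For $w\in W_{j}$ with components $(w_{1},w_{2})$ in $V_{1}\oplus V_{2}$ one has $w-h(w)=(w_{1}-cw_{1},0)\in V_{1}$, and also $w-h(w)\in W_{j}$ since $W_{j}$ is $H$-invariant; hence $w-h(w)\in V_{1}\cap W_{j}=\{0\}$ by transversality, so $c$ fixes $pr_{V_{1}}(W_{j})$ pointwise for $j=1,2$. Since $pr_{V_{1}}(W_{1})+pr_{V_{1}}(W_{2})=pr_{V_{1}}(T_{p}M)=V_{1}$, this gives $c=\mathrm{id}$, i.e. $H_{1}=\{1\}$. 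So your first alternative is the correct mechanism, with the correction that it is $H_{1}$, not all of $H$, that fixes $W_{j}$ pointwise at this stage.

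Two smaller remarks. Your shortcut ``trivial and weakly irreducible implies one-dimensional'' is valid (a non-degenerate space of dimension at least two always contains a proper non-degenerate line, which is invariant under a trivial action) and is more economical than the paper's detour through the universal cover $\widetilde{M}\cong\mathbb{R}^{m}$ and its deck transformation group. Conversely, your preliminary case analysis (at least one $V_{i}$ is transverse to both $W_{j}$, by a dimension count) is more careful than the paper, which silently discards the case $V_{1}\subset W_{1}$. The closing classification of complete flat surfaces admitting two distinct product decompositions is asserted with comparable brevity in both your proposal and the paper.
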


\begin{proof}
Suppose that $\dim M_{1}=k$, and the signature of $M_{i}$ is $\nu_{i}$, such
that the signature of $M$ is $\nu=\nu_{1}+\nu_{2}$. Let $i:O_{\nu_{1}%
}(k)\longrightarrow O_{\nu}(m)$ and $j:O_{\nu_{2}}(m-k)\longrightarrow O_{\nu
}(m)$ be the natural inmersions $i(c)=%
\begin{pmatrix}
c & 0\\
0 & I_{m-k}%
\end{pmatrix}
$, $j(d)=%
\begin{pmatrix}
I_{k} & 0\\
0 & d
\end{pmatrix}
$. We call $G=i(O_{\nu_{1}}(k))j(O_{\nu_{2}}(m-k))$. It is clear that if $M$
is a direct product $M_{1}\times M_{2}$, its holonomy group $H$ is reducible
to a subgroup of $G$, that is, $H=H_{1}H_{2}$ with $H_{1}\subset i(O_{\nu_{1}%
}(k))$ and $H_{2}\subset j(O_{\nu_{2}}(m-k))$. Let $\pi:OM\longrightarrow M$
be the orthonormal frame bundle. Call%
\begin{align*}
E  &  =\{r\in OM\ /\ r:\mathbb{R}^{m}\longrightarrow T_{\pi r}%
M\ such\ that\ carries\ adapted\\
&  \ basis\ of\ \mathbb{R}^{k}\times\mathbb{R}^{m-k}%
\ to\ adapted\ basis\ of\ M_{1}\times M_{2}\}.
\end{align*}

With respect to the decomposition $M=L_{1}\times L_{2}$, fixed an element
$r\in E$, there exists another decomposition of $\mathbb{R}^{m}$ as a direct
product $S_{1}\times S_{2}$ such that $r$ carries an adapted basis of
$S_{1}\times S_{2}$ to an adapted basis of $L_{1}\times L_{2}$.

Both tuples of foliations in $M$ are invariant by parallel transport, that is,
the subspaces $\mathbb{R}^{k}$, $\mathbb{R}^{m-k}$, $S_{1}$ and $S_{2}$ of
$\mathbb{R}^{m}$ are invariant by the holonomy group $H$.

Given $h\in H_{1}$, we can write $h=%
\begin{pmatrix}
c & 0\\
0 & I
\end{pmatrix}
$ with $c\in O_{\nu_{1}}(k)$, and if we call $(x_{1},x_{2})$ the components of
$x=\pi(r)\in M$ in $M_{1}\times M_{2}$ and $(x_{1}^{\prime},x_{2}^{\prime})$
its components in $L_{1}\times L_{2}$, we have the following two ways in which
we can write the composition $r\circ h$%
\[%
\begin{array}
[c]{ccccc}%
\mathbb{R}^{k}\times\mathbb{R}^{m-k} & \overset{h}{\longrightarrow} &
\mathbb{R}^{k}\times\mathbb{R}^{m-k} & \overset{r}{\longrightarrow} &
T_{x_{1}}M_{1}\times T_{x_{2}}M_{2}\\
S_{1}\times S_{2} & \overset{h}{\longrightarrow} & S_{1}\times S_{2} &
\overset{r}{\longrightarrow} & T_{x_{1}^{\prime}}L_{1}\times T_{x_{2}^{\prime
}}L_{2}.
\end{array}
\]

Given $(u,0)\in S_{1}\times\{0\}$, we have $h(u,0)\in S_{1}\times\{0\}$
because $S_{1}$ is invariant by $H$. On the other hand if we write $u$ with
its components in the other decomposition, $u=(u_{1},u_{2})\in\mathbb{R}%
^{k}\times\mathbb{R}^{m-k}$, we have $h(u)=(cu_{1},u_{2})$ and%
\[
u-h(u)=(u_{1}-cu_{1},0)\in(S_{1}\cap\mathbb{R}^{k})\times\mathbb{R}^{m-k}.
\]

By hypothesis $S_{1}\cap\mathbb{R}^{k}$ is zero or a non degenerated subspace
of $\mathbb{R}^{k}$ invariant by $H$, but the holonomy of $M_{1}$ is weakly
irreducible, so it must be zero, thus $H_{1}=\{1\}$. A similar argument for
$H_{2}$ implies that $H=\{1\}$. Theorem \ref{ParallelCouple} implies that $M$
admits a global orthonormal basis formed by parallel vector fields
$E_{1},...E_{m}$. By completeness, the universal covering $\widetilde{M}$
splits as $\mathbb{R}^{m}$ with a flat metric. The group of deck
transformation preserves the parallel basis, otherwise $H$ would not be
trivial, thus $M_{i}$ is a product of $m_{i}=\dim M_{i}$ factors each one
being $\mathbb{R}$ or $\mathbb{S}^{1}$, but the holonomy of $M_{i}$ being
weakly irreducible we have $m_{i}=1$, therefore $m=\dim M=2$. The only
complete flat surfaces that admits two different structures as a direct
product are the euclidean and the Minkowski plane.
\end{proof}

Note that in this proof, we do not suppose a priori that $L_{i}$ must also be
indecomposable nor $M$ simply connected. This is of crucial importance because
if we suppose them, the uniqueness of the decomposition in the Theorem of de
Rham-Wu can be used to give a direct proof, \cite{Wu64}, \cite[Apendix
I]{Wu67}, \cite{Chen2014}. In fact, $M$ should be isometric to $\mathbb{R}%
^{m}$ with flat metric and $H=\{1\}$. So $M_{i}$ is a direct product of
$m_{i}=\dim M_{i}$ factors $\mathbb{R}^{1}$, but $M_{i}$ indecomposable
imposes $m_{i}=1$. Thus $\dim M=2$.

\section{Topologies on the space of all metrics with precompact holonomy}

Having made several statements about single metrics with precompact holonomy,
let us try to explore the topology of the space of \emph{all} time-oriented
metrics on an orientable manifold $M$ that have precompact holonomy, in
analogy to the situation in positive curvature. However, it will turn out in
the following that much of its topology is hidden behind the not quite
accessible topology of the space of Lorentzian metrics. In the light of
applications like the Einstein equation considered as a variational problem,
it is outermost desirable to construct an appropriate topology on the space of
Lorentzian metrics. Several topologies on that space and on related spaces
have been considered. In a row of articles, Bombelli, Meyer, Noldus and
Sorkin, e.g., introduced topologies on the quotient $\mathrm{Lor}%
(M)/\mathrm{Diff}(M)$ based on a splitting between the conformal and the
volume part (for an overview, see \cite{jN}), but unfortunately, this topology
is not a manifold topology in general. As we are ultimately interested in
variational problems, and thus look for a manifold topology on the space
$\mathrm{Lor}_{+}(M)$ of time-oriented Lorentzian metrics, the simplest choice
is the subspace topology with respect to a topological vector space topology
on the space $\mathrm{Bil}(M)$ of bilinear forms on $M$. Let%
\[
PT (M):=\{g\in\mathrm{Lor}_{+}(M)\ /\ \overline{Hol_{g}}\ \mathrm{compact}\}
\]
thus, following Theorem \ref{Teorema1}, $PT(M)$ is the set of time-oriented
Lorentzian metrics with a parallel timelike vector field.

We define $G(M)$ to be the space of all globally hyperbolic metrics and $C(M)
$ to be the set of all \emph{causally complete}, i.e. timelike and lightlike
complete, metrics on $M$.

First of all we want to compare the different possible topologies on $PT(M)$
(understood as a subset of $\mathrm{Lor}(M)$). On one hand, if $M$ is
noncompact, only a topology at least as fine as the $C^{0}$-fine (Whitney)
topology on $\mathrm{Bil} (M)$ ensures that $\mathrm{Lor}(M)$ is an open
subset of $\mathrm{Bil}(M)$. On the other hand, as we want to be able to
define parallel vector fields, all metrics should at least be $C^{1}$, and
thus, the fact that we want to have a \emph{complete} vector space topology on
$Bil(M)$, recommends us to choose a topology at least as strong as the $C^{1}%
$-compact-open topology. First of all, for $PC (M)$ being the set of
time-oriented Lorentzian metrics with a parallel causal vector field, we
observe that we can control the closure of $PT(M)$ in terms of $PC(M)$:

\begin{theorem}
\label{closure} [Closure of $PT(M)$]Let $M$ be a manifold.

\begin{enumerate}
\item {In any topology finer or equal to the $C^{1}$-compact-open\textbf{\ }%
topology, the closure of the set $PT(M)$ is contained in $PC(M)$.}

\item {If $M$ is diffeomorphic to }$\mathbb{R}${$\times S$ for some manifold
$S$ (to ensure $G(M)\neq\emptyset$), then for $E:=PC(M)\cap G(M)\cap C(M)$ and
$F:=PT(M)\cap G(M)\cap C(M)$, there is some $e\in PC(M)\cap C(M)\cap
G(M)\setminus PT(M)$ and a curve $c:[0,1]\rightarrow E$ that is smooth w.r.t.
every $C^{k}$-compact open topology with $c(1) = e$ and with $c([0,1))\subset
F$.}
\end{enumerate}
\end{theorem}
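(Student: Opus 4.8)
The plan is to exhibit an explicit affine path of metrics that tilts a parallel \emph{null} direction of a pp-wave into a parallel \emph{timelike} one, so that the whole path but its endpoint has precompact holonomy. I would work on $M=\mathbb{R}^{n}=\mathbb{R}\times\mathbb{R}^{n-1}$ (so $S=\mathbb{R}^{n-1}$), with $n\geq 3$, fix a bounded smooth profile $H:\mathbb{R}^{n-2}\rightarrow\mathbb{R}$ with $\inf H>-1$ and $\mathrm{Hess}\,H\not\equiv 0$, use coordinates $(u,v,x_{1},\dots,x_{n-2})$, and set for $s\in[0,1]$
\[
g_{s}=2\,du\,dv-(1-s)\,dv^{2}+H(x)\,du^{2}+\sum_{i}dx_{i}^{2},\qquad e:=g_{1}.
\]
First I would record that each $g_{s}$ is Lorentzian and that $\partial_{v}$ is $g_{s}$-parallel: the coefficients do not depend on $u,v$ and the whole $v$-row of the metric is constant, so every Christoffel symbol carrying a $v$-index vanishes. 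Since $g_{s}(\partial_{v},\partial_{v})=-(1-s)$, the field $\partial_{v}$ is timelike for $s<1$ and null at $s=1$; in particular $g_{s}\in PC(M)$ for all $s$, and $e\in PC(M)$.

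The holonomy dichotomy is then analyzed in two regimes. For $s<1$ I would complete the square via $w:=v-\tfrac{1}{1-s}u$, obtaining
\[
g_{s}=-(1-s)\,dw^{2}+\Big(\tfrac{1}{1-s}+H(x)\Big)du^{2}+\sum_{i}dx_{i}^{2},
\]
which, because $\inf H>-1$ forces $\tfrac{1}{1-s}+H>0$ everywhere, is a genuine Lorentzian product $(\mathbb{R},-(1-s)\,dw^{2})\times(\mathbb{R}^{n-1},h_{s})$ with $h_{s}$ Riemannian and uniformly equivalent to the flat metric, hence complete. Thus $g_{s}$ is geodesically complete and globally hyperbolic with Cauchy surface $\mathbb{R}^{n-1}$, so $g_{s}\in G(M)\cap C(M)$; and since it carries the parallel timelike field $\partial_{v}$, Theorem~\ref{Teorema1} gives $g_{s}\in PT(M)$. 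Hence $c(s):=g_{s}$ satisfies $c([0,1))\subset F$. For the endpoint, $e$ is a Brinkmann pp-wave whose only nonzero curvature components are built from $\mathrm{Hess}\,H$; as $\mathrm{Hess}\,H\not\equiv 0$, its holonomy is a non-trivial subgroup of the abelian unipotent radical of the stabiliser of $\partial_{v}$, and every non-trivial subgroup of a vector group has non-compact closure, so Theorem~\ref{Teorema1} yields that $e$ admits no parallel timelike field, i.e. $e\notin PT(M)$.

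It remains to verify that the limit $e$ lies in $G(M)\cap C(M)$, and this is the step I expect to be the main obstacle: a pp-wave need not be globally hyperbolic or complete, so one must exploit that $H$ is bounded (in particular subquadratic) in $x$ to invoke the causality and completeness theory of plane-fronted waves --- the criteria of Flores--S\'anchez together with completeness results in the spirit of \cite{LeistnerSchliebner13} --- which guarantee global hyperbolicity with Cauchy surface $\mathbb{R}^{n-1}$ and geodesic completeness precisely for profiles of at most quadratic growth. Granting this, $e\in PC(M)\cap C(M)\cap G(M)\setminus PT(M)$, and the curve $c:[0,1]\rightarrow E$ indeed lands in $E$ for every $s$. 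Finally, smoothness is immediate: $s\mapsto g_{s}$ is \emph{affine} in $s$ (only the coefficient $-(1-s)$ of $dv^{2}$ varies), hence real-analytic and in particular smooth for every $C^{k}$-compact-open topology, with $c(1)=e$ and $c([0,1))\subset F$. I would close by noting that the same tilting device $g_{s}=e-(1-s)\,\theta\otimes\theta$ applies to any globally hyperbolic complete $e\in PC\setminus PT$ with parallel null one-form $\theta$ for which the tilted metrics remain Lorentzian, complete and globally hyperbolic, thereby giving the statement on the corresponding $M\cong\mathbb{R}\times S$.
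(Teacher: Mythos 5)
Your construction addresses only the second item of the theorem; the first item --- that in any topology at least as fine as the $C^{1}$-compact-open topology the closure of $PT(M)$ lies in $PC(M)$ --- is not touched at all. This is half of the statement and needs a separate argument: the paper proves it by showing that $PC(M)$ is closed. For $g\notin PC(M)$ every causal direction at a fixed point is moved by the parallel transport along some loop; this non-invariance persists for all metrics in a $C^{1}$-compact-open neighborhood (parallel transport depends on first derivatives of the metric along the compact image of the loop); and compactness of the set of causal directions intersected with an auxiliary unit sphere yields finitely many loops and a single neighborhood of $g$ disjoint from $PC(M)$. Nothing in your pp-wave path substitutes for this.

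For the second item, your deformation $g_{s}=2\,du\,dv-(1-s)\,dv^{2}+H\,du^{2}+\sum_{i}dx_{i}^{2}$ is a legitimate and rather clean alternative on $M=\mathbb{R}^{n}$: completing the square for $s<1$ does exhibit a metric product in which $\partial_{v}$ is parallel timelike, and at $s=1$ the nontrivial holonomy, being a positive-dimensional connected subgroup of a vector group and hence closed and noncompact, correctly excludes $c(1)\in PT(M)$ via Theorem \ref{Teorema1}; global hyperbolicity and completeness of the endpoint for bounded $H$ are standard, though they rest on results (Flores--S\'anchez type criteria for plane-fronted waves) outside the paper. The genuine gap is that the statement is asserted for \emph{every} $M$ diffeomorphic to $\mathbb{R}\times S$, and your construction requires the two flat null coordinates $u,v$, i.e.\ $M\cong\mathbb{R}^{2}\times N$; for $S$ compact, say, this is unavailable. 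Your closing remark reduces the general case to the existence, on an arbitrary such $M$, of a globally hyperbolic, causally complete $e\in PC(M)\setminus PT(M)$ carrying a parallel null one-form for which the tilted family stays in $G(M)\cap C(M)$ --- but producing such an $e$ is precisely the content of the claim, so the reduction is circular. The paper avoids this by tilting the cross term instead: it takes $c(r)=-(1-r)\,dt^{2}+r(dt\otimes\alpha+\alpha\otimes dt)+\overline{g}$ for a complete metric $\overline{g}$ on the given $S$ and a bounded one-form $\alpha$, arranged so that some point of $S$ has positive sectional curvature, and excludes $c(1)\in PT(M)$ by a null sectional curvature argument rather than by computing holonomy. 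You would either need to adapt your pp-wave device to general $S$ or adopt a construction of this latter kind to obtain the theorem as stated.
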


\begin{proof}
Obviously $PT(M)\subset PC(M)$, so for the first assertion it is enough to see
that $PC(M)$ is closed. Take any $g\in\mathrm{Lor}(M)\setminus PC(M)$. For
every causal vector $v\in T_{p}M$ there exists a closed loop $c_{v}$ at $p$
such that the $g$-parallel transport along $c_{v}$ does not fix $v$, that is
$v\not =P_{c_{v}}^{g}(v)$. It is easy to see that still $v\not =P_{c_{v}}%
^{h}(v)$ for $h$ in an open neighborhood of $g$, where now $v$ may or may not
be an $h$-causal vector. Associated to $v$ we can take a tuple $(W_{v},V_{v})$
consisting of open neighborhoods of $g$ and $v$ respectively, small enough
such that $u\not =P_{c_{v}}^{h}(u)$ for every $h\in W_{v}$ and $u\in V_{v}$.
The set $L_{g}$ of $g$-causal vectors in $T_{p}M$ itself is not compact,
however, for every auxiliary scalar product in $T_{p}M$ and associated norm
$\left\vert \cdot\right\vert $, we can consider its unit sphere $S_{p}%
^{k}M:=\{v\in T_{p}M\ /\ \left\vert v\right\vert =1\}$, so $L_{g}\cap
S_{p}^{k}M$ is compact, and therefore covered by a finite number of open sets
$V_{v_{1}},...,V_{v_{k}}$. Take an open set $W\subset\cap_{i=1}^{k}W_{v_{i}}$
such that $g\in W$ and $V_{v_{1}},...,V_{v_{k}}$ still cover $L_{h}\cap
S_{p}^{k}M$ for every $h\in W $. If $h\in W$ and $v\in L_{h}$, there exists
$i$ such that $\frac{v}{\left\vert v\right\vert }\in V_{v_{i}}$ so
$P_{c_{v_{i}}}^{h}(v)\not =v$ because $h\in W\subset W_{v_{i}}$. This shows
that $W\subset\mathrm{Lor}(M)\setminus PC(M)$.

Now for the second part, assume $(M,g):=(\mathbb{R}_{t}\times S,\alpha\otimes
dt+dt\otimes\alpha+\overline{g})$ for a complete metric $\overline{g}$ on $S$
and a $\overline{g}$-bounded one-form $\alpha$ on $S$. Furthermore assume that
there is a point $x\in S$ with sectional curvature $k_{x}^{S}>0$. This can be
done with an arbitrarily small perturbation of a given metric in the $C^{k}%
$-compact open topology.

Define%
\[
c(r) :=-(1-t)dt^{2}+r(dt\otimes\alpha+\alpha\otimes dt)+\overline{g}%
\]
for $r\in\lbrack0,1]$ which is a continuous curve in $Lor(M)$, smooth w.r.t. every $C^k$-compact-open topology. One finds that
$t$ is a Cauchy time function for all $r$. In fact, it is easy to see that any
future vector $v$ has positive scalar product with $\mathrm{grad}_{c(r)}(t)$.

Let $k:\mathbb{R}\rightarrow M$ be a causal curve. Now, if $t\circ k$ is
bounded, it has a limit $t_{0}$ due to its monotonicity.

Now we parametrize $k$ according to $t$, that is, $k(t)=(t,\overline{k}),$ on
a bounded interval $[0,b)$. The Cauchy-Schwarz inequality implies that
$|\alpha(\overline{k})^{\prime}|\leq\left\vert \alpha\right\vert |\overline
{k}^{\prime}|$, the norm always being the one defined by $\overline{g}$.

Then, for $r=1$, using the causal character of $k$, we get $|\overline
{k}^{\prime}|\leq2\left\vert \alpha\right\vert $.

In case of $r<1$ we can solve the corresponding quadratic inequality for
$|\overline{k}^{\prime}|$ and get as a condition necessary for $c$ causal%

\[
|\overline{k}^{\prime}|\leq r\left\vert \alpha\right\vert +\sqrt
{r^{2}\left\vert \alpha\right\vert ^{2}+(1-r)}.
\]

Thus, by completeness of $\overline{g}$, also the $S$-coordinate along $k$ has
a limit at $b$, thus $t$ is Cauchy$,$ so $g_{r}\in G(M)$. Moreover, as
$\mathrm{grad}_{c(r)}(t)$ is $c(r)$-parallel, in particular it is $c(r)%
$-Killing, we have $c(r)(\mathrm{grad}_{c(r)}(t),k^{\prime})$ is constant
along any geodesic $k$. Thus $c(r)$ is a causally geodesically complete metric. This (and
the fact that $\mathrm{grad}_{c(r)}(t)$ is timelike for every $r\in
\lbrack0,1)$ and lightlike for $r=1)$, shows that $c([0,1))\subset F$ and
$c(1) \in E$.

Suppose now that $c(1) \in PT (M)$, that is, there exists a timelike $c(1)%
$-parallel vector field $Z\in\mathfrak{X}(M)$, in particular it is linearly
independent to $\frac{\partial}{\partial t}$ at any point. So there are
$c(1)$-degenerate planes $\pi$ in $T_{q}M$ for any point $q\in M$ such that
its null sectional curvature is zero, but this is not possible at points
$p=(t,x)\in M$ for any $t\in\mathbb{R}$ because by hypothesis ${\rm sec}_{x}^{S}>0$,
see \cite[Theorem 6.3 and Lemma 5.2]{GutOle09}. Contradiction.
\end{proof}

Now let us consider more closely the fine topologies. We want to argue in the
following that they are \emph{not} appropriate to consider spaces of metrics
of precompact holonomy. For a finite-dimensional bundle $\pi: E \rightarrow M
$, let $\Gamma^{k}( \pi)$ denote the space of sections of $\pi$ of regularity
$C^{k}$. The $C^{0}$-fine topology on $\Gamma^{0}(\pi)$ has as a neighborhood
basis of a section $f$ the family of sets $W_{U}:=\{\gamma\in\Gamma^{0}%
(\pi)\ /\ \gamma (M)\subset U\}$ where $U$ is an open neighborhood of $f\subset E$.
If $\pi$ is a vector bundle such that the fibers are locally convex metric
vector spaces with an arbitrary translational-invariant metric\footnote{Keep
in mind that here we use the word 'metric' not in the sense of bilinear form
but in the sense of distance on a metric space.} then we can describe the
topology in a different manner: Let $P$ be the space of smooth positive
functions on $M$, then, for $p\in P$, which could be called a \emph{profile
function}, we set%

\[
U_{p}:=\{f\in\Gamma^{0}(\pi)\ /\ d(f(x),0_{x})<p(x)\}
\]
where $0_{x}$ is the zero in $\pi^{-1}(x)$. Then $\{f+U_{p}\}$ is a
neighborhood basis for $f$ as well. The equivalence of these two descriptions
is easy to see, the arbitrariness of the auxiliary metric is compensated by
the flexible choice of the profile function. The $C^{k}$-fine topology is
defined by applying the same to the map $d^{k}\gamma$ as a section of the
bundle $S^{k}E\rightarrow S^{k}M$ where, for a manifold $N$, $S^{k}N$ is the
bundle of unit vectors in $T^{k}N$ for an arbitrary auxiliary Riemannian
metrics. For more details cf. \cite{BeemEhrlichEasley1996} and the references
therein. The choice of the fiber metric used is irrelevant due to local
compactness of the fiber.

The following theorem should be well-known to the experts, however we could
not find any reference in the literature and thus include a proof here:

\begin{theorem}
\label{CasiNoPasaNada} Let $\pi:E\rightarrow M$ be a metric vector bundle with
locally convex fibers over a finite-dimensional manifold. Let $a,b$ be two
$k$-times continuously differentiable sections of $\pi$. Then $a$ and $b$ are
in the same path connected component of $\Gamma^{k}(\pi)$ if and only if
$\mathrm{supp}(a-b)$ is compact.
\end{theorem}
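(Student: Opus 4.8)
The plan is to prove the two implications separately, the forward one (compact support $\Rightarrow$ same path component) being a straight-line homotopy, and the reverse one being the substantial part.

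For the \emph{if} direction I would exploit that $E$ is a vector bundle and join $a$ to $b$ by the affine path $c(s)=a+s(b-a)$, $s\in[0,1]$. The only point to verify is continuity into the $C^{k}$-fine topology. Writing $w:=b-a$, with $K:=\mathrm{supp}(w)$ compact, the translation invariance of the fiber metric gives $d(c(s)(x),c(s_{0})(x))=d((s-s_{0})\,w(x),0_{x})$, which vanishes off $K$. Since $x\mapsto w(x)$ has compact image in $E$, since $0\cdot w(x)=0_{x}$, and since scalar multiplication is continuous (the fibers being topological vector spaces, with local convexity ensuring the metric balls behave well under these combinations), this distance tends to $0$ uniformly for $x\in K$ as $s\to s_{0}$; as any profile function $p$ has a positive minimum on the compact set $K$, I can then produce $\delta$ with $c((s_{0}-\delta,s_{0}+\delta))\subset c(s_{0})+U_{p}$. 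The $C^{k}$-case is identical: $c(s)$ being affine in $s$, its jet is affine too and has support contained in $K$, so the same estimate controls all derivatives simultaneously.

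The reverse direction is the heart of the matter, and its engine is a \emph{localization lemma}: if $\gamma:[0,1]\to\Gamma^{k}(\pi)$ is fine-continuous, then every $s_{0}$ has a neighborhood $J_{s_{0}}$ and a compact $K_{s_{0}}\subset M$ such that $\gamma(s)$ and $\gamma(s_{0})$ agree on $M\setminus K_{s_{0}}$ for all $s\in J_{s_{0}}$. Because the $C^{k}$-fine topology refines the $C^{0}$-fine one, a $C^{k}$-fine path is $C^{0}$-fine continuous, and since the support of $b-a$ is already detected at the level of values I may argue at $k=0$. I would prove the lemma by contradiction: fixing a compact exhaustion $K_{1}\subset K_{2}\subset\cdots$ of $M$, its failure produces $s_{n}\to s_{0}$ and points $x_{n}\notin K_{n}$ with $\gamma(s_{n})(x_{n})\neq\gamma(s_{0})(x_{n})$; as $x_{n}\notin K_{n}$ the set $\{x_{n}\}$ has no accumulation point, hence is closed and discrete. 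Setting $\epsilon_{n}:=d(\gamma(s_{n})(x_{n}),\gamma(s_{0})(x_{n}))>0$, I would then build, via bump functions on disjoint coordinate balls around the $x_{n}$, a \emph{smooth positive} profile function $p$ with $p(x_{n})\le\tfrac{1}{2}\epsilon_{n}$. This single $p$ defeats continuity at $s_{0}$, since for every $\delta>0$ some $s_{n}$ lies within $\delta$ of $s_{0}$ yet $\gamma(s_{n})\notin\gamma(s_{0})+U_{p}$. \textbf{This construction of a profile function that sees arbitrarily far out is the main obstacle}, as it is precisely the mechanism by which the fine topology forbids motion at infinity along a path.

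Finally I would globalize. Compactness of $[0,1]$ yields a finite subcover $J_{s_{1}},\dots,J_{s_{N}}$; set $K:=K_{s_{1}}\cup\cdots\cup K_{s_{N}}$, which is compact. By the lemma, on each $J_{s_{i}}$ the restriction $\phi(s):=\gamma(s)|_{M\setminus K}$ equals $\gamma(s_{i})|_{M\setminus K}$, so $\phi$ is locally constant on $[0,1]$, hence constant by connectedness. Applied to a path $\gamma$ from $a$ to $b$ this gives $\gamma(1)|_{M\setminus K}=\gamma(0)|_{M\setminus K}$, i.e. $\mathrm{supp}(b-a)\subseteq K$ is compact, as claimed. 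When $M$ is compact both assertions are automatic: every support is compact, and the affine homotopy always connects $a$ to $b$.
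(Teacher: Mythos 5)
Your proof is correct, and it actually covers both implications, whereas the paper only writes out the ``only if'' direction (treating the straight-line homotopy as evident). For the substantial direction your route is genuinely different from the paper's. The paper argues globally in the path parameter: it picks points $p_n\to\infty$ in $\mathrm{supp}(a)$ with $d_n:=d(a(p_n),0)>0$, fixes a single neighborhood $U$ of the zero section whose fibre over $M\setminus C_n$ lies in the ball of radius $d_n/n$, covers the compact image $c([0,1])$ by finitely many translates $c(t_i)+U$, and then chains the triangle inequality through the at most $m$ overlapping translates to obtain $d(a(p_i),0)\leq m\,d_i/i<d_i$ for $i>m$ --- a quantitative contradiction. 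You instead prove a localization lemma --- every $s_0$ has a parameter neighborhood on which $\gamma(s)$ agrees \emph{exactly} with $\gamma(s_0)$ outside one compact set --- by defeating continuity at $s_0$ with a profile function pinched to $\tfrac12\epsilon_n$ along a closed discrete escaping sequence, and you then conclude by connectedness of $[0,1]$ via a locally constant map. Both arguments rest on the same mechanism (fine neighborhoods may shrink arbitrarily fast at infinity, realized by the decaying widths $d_n/n$ in the paper and by your pinched profile function), but yours trades the paper's finite-chain estimate for an exact local-constancy statement: it is slightly longer, avoids balancing the decay rate against the number $m$ of covering sets, and yields the stronger, reusable fact that any fine-continuous path of sections is locally constant outside compact sets.
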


\begin{proof}
As everything is translationally invariant, w.l.o.g. we can assume $b=0$, the
zero section. Assume the opposite of the statement of the theorem, that is,
there is a noncompactly supported section $a$ in the same path connected
component as $0$. By assumption, there is a $C^{0}$ curve $c:[0,1]\rightarrow
\Gamma^{k}(\pi)$ from $0$ to $a$. Choose $p_{n}\in\mathrm{supp}(a)$,
$p_{n}\rightarrow\infty$ (a sequence leaving every compact set) and define
$d_{n}:=d(a(p_{n}),0)>0$. Let $(C_{n})_{n\in\mathbb{N}}$ be a compact
exhaustion with $p_{n}\in C_{n+1}\setminus C_{n}$. And consider an open
neighborhood $W_{U}$ of $0$ as above with $U\cap\pi^{-1}(M\setminus
C_{n})\subset B_{d_{n}/n}$, for all $n$. As $c([0,1])$ is compact, it has a
finite covering by sets of the form $U_{i}:=c(t_{i})+U$, say $U_{1},...U_{m}$.
Then iterative application of the triangle inequality implies that
$d(a(p_{i}),0)\leq m\cdot d_{i}/i<d_{i}$ for $i>m$, contradiction.
\end{proof}

\vspace{0.5cm}

Now, the first corollary of the previous theorem is that $G(M)$ alone has
uncountably many path connected components each of which is intersected
nontrivially by $PT(M)$. This holds even if we mod out the action of the
diffeomorphism group on the space of metrics as it leaves the topology of the
Cauchy hypersurfaces unchanged.

\begin{corollary}
\label{Cauchy} Within each path connected component of $G(M)$ in
$\mathrm{Lor}(M)$ equipped with the $C^{0}$-fine topology, the topology of the
Cauchy surface does not vary. Consequently, for $M$ diffeomorphic to
$\mathbb{R}^{n}$ with $n\geq4$, the set $G(M)/\mathrm{Diff}(M)$ has
uncountably many path connected components, each of which contains elements of
$PT(M)$.
\end{corollary}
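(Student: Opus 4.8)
The plan is to read the first sentence as a local-constancy statement and then let it do the work for the second. I would consider the assignment $\Phi\colon G(M)\to\{\text{diffeomorphism types of }(n-1)\text{-manifolds}\}$, $g\mapsto[\Sigma_g]$, which is well defined because all Cauchy surfaces of a fixed globally hyperbolic $(M,g)$ are mutually diffeomorphic (Geroch, Bernal--S\'anchez), and I would show that $\Phi$ is locally constant in the $C^0$-fine topology, hence constant on each path component. For local constancy I would invoke Theorem~\ref{CasiNoPasaNada}: if $g_0,g_1$ lie in one $C^0$-fine path component of $G(M)$, they a fortiori lie in one path component of the ambient section space $\mathrm{Bil}(M)=\Gamma^0(S^2T^*M)$, so $\mathrm{supp}(g_0-g_1)$ is compact; that is, $g_0=g_1$ on $M\setminus K$ for some compact $K$. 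Everything then reduces to the purely causal claim that \emph{two globally hyperbolic metrics on $M$ that agree off a compact set have diffeomorphic Cauchy surfaces}.

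This claim is where I expect the genuine work, and it is exactly the step for which the fine topology (through the compact support it forces) is indispensable. I would fix a $g_0$-Cauchy temporal function $\tau$, rescaled so that $K\subset\tau^{-1}((-1,1))$, and set $\Sigma^{\pm}:=\tau^{-1}(\pm2)$; these are $g_0$-Cauchy surfaces lying in the agreement region $\{g_0=g_1\}$, on which $\tau$ is simultaneously a $g_1$-temporal function. The slice $\Sigma^{+}$ is $g_1$-acausal because a $g_1$-causal curve with both endpoints on $\Sigma^{+}$ would have to decrease $\tau$ somewhere in $\{\tau\ge1\}\subset\{g_0=g_1\}$, violating the strict $g_1$-monotonicity of $\tau$ there. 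For the Cauchy property I would argue that an inextendible $g_1$-causal curve, once inside $\{\tau\ge1\}$, is a $g_0$-causal curve along which the $g_0$-Cauchy function $\tau$ runs off to $+\infty$, so it must cross $\tau=2$; the delicate remaining point, which I would flag as the main obstacle, is ruling out imprisonment of the curve in $\{\tau<1\}$ using global hyperbolicity of $g_1$. This makes $\Sigma^{+}$ a common Cauchy surface, giving $\Sigma_{g_0}\cong\Sigma^{+}\cong\Sigma_{g_1}$. (Alternatively one may establish local constancy directly, showing that a fixed smooth spacelike $g$-Cauchy surface stays Cauchy under $C^0$-fine perturbations: the timelike-gradient condition is pointwise open, while non-imprisonment is controlled by precisely the Cauchy--Schwarz-and-completeness estimate already used in the proof of Theorem~\ref{closure}.)

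For the second sentence I would turn constancy into a count. The topological input is that for $n\ge4$, so that the slice dimension $m:=n-1$ satisfies $m\ge3$, there is an uncountable family $\{\Sigma_\alpha\}_{\alpha\in A}$ of pairwise non-homeomorphic contractible open $m$-manifolds with $\Sigma_\alpha\times\mathbb{R}\cong\mathbb{R}^n$; the case $m=3$ is furnished by McMillan's uncountably many Whitehead-type $3$-manifolds, and analogous Whitehead-type constructions exist in every dimension $m\ge3$. The restriction $n\ge4$ is essential, since a contractible open manifold of dimension $\le2$ is already $\mathbb{R}^{\le2}$, leaving only one slice type. On each $\Sigma_\alpha$ I fix a complete Riemannian metric $h_\alpha$ and transport $-dt^2+h_\alpha$ to $\mathbb{R}^n$ along a diffeomorphism $\mathbb{R}\times\Sigma_\alpha\cong\mathbb{R}^n$; completeness of $h_\alpha$ makes the resulting $g_\alpha$ globally hyperbolic with Cauchy surface $\Sigma_\alpha$, and $\partial_t$ is parallel and timelike, so $g_\alpha\in PT(M)\cap G(M)$ by Theorem~\ref{Teorema1}.

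Finally I would assemble the pieces. By the first sentence, $g_\alpha$ and $g_\beta$ can share a path component of $G(M)$ only if $\Sigma_\alpha\cong\Sigma_\beta$, so the $g_\alpha$ realize uncountably many distinct components. Passing to $G(M)/\mathrm{Diff}(M)$ changes nothing: a diffeomorphism of $M$ carries Cauchy surfaces to Cauchy surfaces diffeomorphically, so $\Phi$ is $\mathrm{Diff}(M)$-invariant; its fibers are then saturated and open, hence descend to an open, locally constant invariant on the quotient, which still separates the classes $[g_\alpha]$ into uncountably many path components, each of which contains the $PT(M)$-metric $g_\alpha$. The only genuinely nontrivial ingredient is the causal claim of the first step; the rest is soft topology or product-metric bookkeeping.
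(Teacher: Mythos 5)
Your proposal is correct and follows essentially the same route as the paper: Theorem \ref{CasiNoPasaNada} forces two path-connected globally hyperbolic metrics to agree off a compact set $K$, a Cauchy surface avoiding $K$ then serves both metrics, and the uncountable family is produced by standard static metrics built from complete metrics on the uncountably many pairwise non-diffeomorphic contractible open $(n-1)$-manifolds $C$ together with $\mathbb{R}\times C\cong\mathbb{R}^{n}$ (Chernov--Nemirovski). The one step you flag as delicate --- ruling out that an inextendible $g_{1}$-causal curve stays in $\{\tau<1\}$ --- closes via the standard fact that globally hyperbolic (indeed strongly causal) metrics admit no partial future imprisonment in compact sets, so the curve eventually leaves $K$ for good and is from then on a $g_{0}$-causal curve along which the $g_{0}$-Cauchy temporal function $\tau$ must diverge; the paper simply asserts this step without comment.
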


\begin{proof}
For the first assertion, single out two metrics $g_{1},g_{2}\in G(M)$ in the
same path connected component, then apply the previous theorem to
$\mathrm{Lor}(M)$ (equipped with any auxiliary Riemannian metric on the
fibers) to obtain that $g_{1}=g_{2}$ outside of a compact set $K$. Now, any
Cauchy surface which does not intersect $K$ is a Cauchy surface for either
metric. A recent result of Chernov-Nemirovski (\cite{CheNem13}, Remark 2.3)
states that for an open contractible manifold $C$ of dimension $n-1$, the
product $\mathbb{R}\times C$ is diffeomorphic to $\mathbb{R}^{n}$. Now equip
$C$ with a complete metric $g$ and consider the standard static manifold over
$(C,g)$. It is obviously diffeomorphic to $\mathbb{R}^{n}$. The Cauchy
surfaces, however, are diffeomorphic to $C$. As we know (see \cite{Wright92}
and the references therein) that for $n-1\geq3$, there are uncountably many
pairwise non-diffeomorphic contractible open manifolds (the Whitehead manifold
being an example for $n-1=3$), the statement follows.
\end{proof}

At this point, the reader probably is tempted to allow for an additional
compact factor $N$ to $\mathbb{R}^{n}$ and then to repeat the proof above.
However, this is not possible as the proof would yield $C_{1}\times N\cong
C_{2}\times N$ which could be true even for $C_{1}$ not homotopy equivalent to
$C_{2}$, for an example with $N=\mathbb{S}^{1}$ see \cite{Charlap65}. However,
we think that it should be possible to use the argument above for any
noncompact Cauchy surface, by replacing Whitehead's manifold suitably.

\begin{remark}
\label{R 5}It is well-known (see e.g. Corollary 7.32 and 7.37 in
\cite{BeemEhrlichEasley1996}) that causal completeness and causal
incompleteness are $C^{1}$-fine-stable properties, i.e., given a globally
hyperbolic causally complete resp. causally incomplete metric $g$, there is a
$C^{1}$-fine open neighborhood $U$ of $g$ such that all metrics $h\in U$ are
causally complete resp. causally incomplete. Using connectedness arguments we
get easily that each connected component of a globally hyperbolic metric
$g_{0}$ in the $C^{1}$-fine topology either consists entirely of causally
complete or consists entirely of causally incomplete metrics.
\end{remark}

The following corollary states that the $C^{0}$-fine topology is already too
fine for our purposes, as it isolates geometrically different metrics from
each other. Namely, if we focus on one of the uncountably many path connected
components, the result is only one $\mathrm{Diff}(M)$-orbit.

\begin{corollary}
\label{Corollary1}

\begin{enumerate}
\item {If $g_{0}\in PT(M)\cap G(M)$ is timelike complete, then any timelike
complete metric in the path connected component of $g_{0}$ in the $C^{0}$-fine
topology is isometric to $g_{0}$.}

\item {If $g_{0}\in PT(M)\cap G(M)$ is timelike complete, then any metric in
the path connected component of $g_{0}$ in the $C^{1}$-fine topology is
isometric to $g_{0}$.}
\end{enumerate}
\end{corollary}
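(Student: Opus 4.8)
The plan is to prove part (1) first, by a compact-support reduction followed by a splitting-and-rigidity argument, and then to deduce part (2) from it. Throughout I read ``the path connected component of $g_0$'' as taken inside $PT(M)\cap G(M)$, so that any competitor $g_1$ again lies in $PT(M)\cap G(M)$ and, being joined to $g_0$ by a path in $\Gamma^0(S^2T^\ast M)$, satisfies the hypotheses of Theorem \ref{CasiNoPasaNada}. The first step is then to apply that theorem to the metric vector bundle $S^2T^\ast M$ and conclude that $\mathrm{supp}(g_1-g_0)$ is compact, i.e. $g_0=g_1$ outside some compact set $K$. In particular $g_1\in PT(M)$ carries its own parallel timelike field by Theorem \ref{Teorema1}.

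The second step is a structure result that I would either cite or establish directly: a time-oriented, globally hyperbolic, timelike geodesically complete Lorentzian manifold carrying a parallel timelike vector field $V$ is a metric product. After normalizing $g(V,V)=-1$, the field $V$ is Killing, geodesic, and has a parallel---hence integrable and totally geodesic---orthogonal distribution; timelike completeness makes the flow $\phi$ of $V$ complete, and global hyperbolicity forces $(\tau,x)\mapsto\phi_\tau(x)$, from $\mathbb{R}\times\Sigma$ for a leaf $\Sigma$, to be a diffeomorphism onto $M$ carrying $-d\tau^2+h$ to $g$, with $\Sigma$ a Cauchy surface. Applying this to $g_0$ and to $g_1$ (both timelike complete) yields isometries $(M,g_i)\cong\mathbb{R}\times(\Sigma_i,h_i)$; moreover a short geodesic computation shows that timelike completeness of such a product is equivalent to Riemannian completeness of $(\Sigma_i,h_i)$, so each $g_i$ is in fact geodesically, hence causally, complete.

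The third and central step is the rigidity: I would show $(\Sigma_0,h_0)$ and $(\Sigma_1,h_1)$ are isometric. Since $g_0=g_1$ off the compact $K$, the projection of $K$ to the $\mathbb{R}$-factor of the $g_0$-product is bounded, so a $g_0$-Cauchy slice $\Sigma$ in the far future lies entirely in $\{g_0=g_1\}$; by Corollary \ref{Cauchy} it is a common Cauchy surface, and the induced metrics $g_0|_\Sigma$ and $g_1|_\Sigma$ coincide. Because a metric product $-d\tau^2+h$ is determined up to isometry by its Riemannian Cauchy factor, both $(M,g_0)$ and $(M,g_1)$ are isometric to $\mathbb{R}\times(\Sigma,g|_\Sigma)$, hence to each other. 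The point needing care---and which I expect to be the main obstacle---is that the parallel timelike fields of $g_0$ and $g_1$ need not be collinear on the common region: if they differ, they do so only by a tilt into a direction along which $(\Sigma,g|_\Sigma)$ is flat (a de Rham factor forced by the presence of an extra parallel field), and such a tilt is a Lorentzian boost of that flat factor, i.e. an isometry. Reconciling the two product splittings through this boost is the delicate part of Step~3.

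Finally, part (2) follows from part (1). The $C^1$-fine topology is finer than the $C^0$-fine one, so any $g_1$ in the $C^1$-fine path component of $g_0$ also lies in its $C^0$-fine path component. By the completeness established in Step~2, $g_0$ is causally complete, so Remark \ref{R 5} shows that every metric in its $C^1$-fine component is causally, in particular timelike, complete. Thus $g_1$ is a timelike complete metric in the $C^0$-fine path component of $g_0$, and part (1) applies to give that $g_1$ is isometric to $g_0$.
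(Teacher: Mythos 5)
Your skeleton matches the paper's: Theorem \ref{CasiNoPasaNada} gives $g_0=g_1$ off a compact set $K$, one then exhibits both metrics as products $-dt^2+h$ over a common far-future slice, and part (2) is deduced from part (1) exactly as in the paper via Remark \ref{R 5}. But the central rigidity step --- your Step 3 --- is where the proof actually lives, and you have left it open. You correctly identify the obstacle (the $g_1$-parallel timelike field need not be orthogonal to the far-future $g_0$-slice $\Sigma$, so $\Sigma$ is a Cauchy surface for $g_1$ without being a factor of the $g_1$-product), but the proposed resolution via ``a boost of a flat de Rham factor'' is only a sketch, and it is not innocuous: the de Rham splitting you would need is only local or on the universal cover, and globally the tilted orthogonal distribution can have leaves that are not even diffeomorphic to $\Sigma$. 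Concretely, on $\mathbb{R}\times\mathbb{S}^1\times N$ with $-dt^2+d\theta^2+g_N$ the field $c\partial_t+d\partial_\theta$ ($c^2>d^2$, $d\neq0$) is parallel and timelike, yet the leaves of its orthogonal distribution are non-achronal copies of $\mathbb{R}\times N$ and the flow map off a leaf is not injective. This also undermines your Step 2 as argued: a parallel timelike field on a globally hyperbolic, timelike complete manifold does \emph{not} by itself make $(\tau,x)\mapsto\phi_\tau(x)$ a diffeomorphism onto $M$ from $\mathbb{R}\times(\text{leaf})$; one needs the field to be (a gradient of) a temporal function, or some substitute.

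The paper closes exactly this gap by a construction you do not have: it defines $Q^0$ on all of $M$ by $\nabla^{g_1}_{P^1}Q^0=0$ with initial value $P^0$ on the far-future Cauchy slice $S=t_0^{-1}(a)$ (where $P^0=P^1$-independent data makes sense since $g_0=g_1$ there), and proves $\nabla^{g_1}Q^0=0$ by a curvature computation using that the mixed terms $R^{g_1}(P^1,\cdot)\cdot$ vanish. The resulting field is $g_1$-parallel \emph{and} orthogonal to the Cauchy surface $S$, so its flow gives an isometry of $(M,g_1)$ with $(\mathbb{R}\times S,-dt^2+h)$ for the same $(S,h)$ that works for $g_0$; no comparison of two a priori unrelated splittings is needed. (A smaller point: you invoke Corollary \ref{Cauchy} to make $\Sigma$ a common Cauchy surface, but that corollary only says the Cauchy surfaces have the same topology; the paper proves directly, by a non-trapping argument, that $t_0^{-1}(a)$ is Cauchy for $g_1$.) As it stands, your proposal has a genuine gap at its crux.
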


\begin{proof}
Let $g_{1}$ be another element of $PT(M)\cap G(M)$ path-connected to $g_{0}$.
Both $g_{0}$ and $g_{1}$ admit global decompositions $I_{0}:(M,g_{0}%
)\rightarrow\mathbb{R}\times(S,h_{0})$ and $I_{1}:(M,g_{1})\rightarrow
\mathbb{R}\times(S,h_{1})$ (taking into account that the Cauchy surfaces of
$g_{0}$ are diffeomorphic to those of $g_{1}$ following Corollary
\ref{Cauchy}) with corresponding parallel vector fields $P^{0}=\mathrm{grad}%
^{g_{0}}(t_{0})$ resp. $P^{1}=\mathrm{grad}^{g_{1}}(t_{1})$ for associated
temporal functions $t_{0}$ resp. $t_{1}$. As the metrics coincide outside of a
compact set $K$ following Theorem \ref{CasiNoPasaNada}, the vector field
$P^{0}$ is $g_{0}$-parallel \textit{and} $g_{1}$-parallel on $(M\backslash
K,g_{1})$. Let us define $b:=\mathrm{sup}\{t_{0}(x)~/~x\in K\}$, then $P^{0}$
is in particular $g_{1}$-parallel and $g_{1}$-timelike on $t_{0}%
^{-1}((b,\infty))$.

Choose $a>b$. We want to construct an isometry between $(M,g_{1})$ and
$(\mathbb{R}\times S,-dt^{2}+h)$ where $h$ is the metric on $S:=t_{0}^{-1}(a)$
induced by the metric $g_{1}$ (or equivalently, by the metric $g_{0}$, as the
two are equal on $t_{0}^{-1}((b,\infty))$. Now we first show that $t_{0}%
^{-1}(a)$ is a Cauchy hypersurface of $(M,g_{1})$. So let a $C^{0}%
$-inextendible $g_{1}$-future curve $c$ be given. By the usual non-trapping
arguments, $c^{-1}(K)$ is compact. Let $s$ denote its maximum, then
$c|_{(s,\infty)}$ is a $g_{1}$-future causal curve in $M\setminus K$ that is
also a $g_{0}$-future causal curve as $g_{1}|_{M\setminus K}=g_{0}%
|_{M\setminus K}$. As moreover $\lim_{r\rightarrow s}c(r)\leq b<a$, we
conclude that the image of $c|_{(s,\infty)}$ intersects $t_{0}^{-1}(a)$. Thus,
indeed, $t_{0}^{-1}(a)=:S$ is a Cauchy surface for $(M,g_{1})$.

We define a vector field $Q^{0}$ by parallel transport from $S$ along $P^{1}$
with initial value $P^{0}$ on $S$, that is, $\nabla_{P^{1}}^{1}Q^{0}=0$ and
$Q^{0}|_{S}=P^{0}$ (where $\nabla^{1}:=\nabla^{g_{1}}$). This indeed defines a
vector field on $M$, as $S$ is a $g_{1}$-Cauchy surface and $P^{1}$ is a
complete future timelike vector field, thus its integral curves are $C^{0}%
$-inextendible future causal curves. Now we want to show that $\nabla^{1}%
Q^{0}=0$. So let $e_{i}$ be the $P^{1}$-parallel extension of a local
orthonormal basis of $TS$, then we have $\nabla_{P^{1}}^{1}e_{i}=[e_{i}%
,P^{1}]=\nabla_{e_{i}}^{1}P^{1}=0$. Moreover, the mixed curvature terms
vanish: $R^{1}(P^{1},W)V=0$ for any vectors $V,W$. Consequently, we get%

\[
\nabla_{P^{1}}^{1}\nabla_{e_{i}}^{1}Q^{0}=R^{1}(P^{1},e_{i})Q^{0}%
+\nabla_{e_{i}}^{1}\nabla_{P^{1}}^{1}Q^{0}+\nabla_{\lbrack P^{1},e_{i}]}%
^{1}Q^{0}=0,
\]
and the initial condition $Q^{0}|_{S}=P^{0}$ implies $\nabla_{e_{i}}^{1}%
Q^{0}|_{S}=0$, so the claim $\nabla^{1}Q^{0}=0$ follows. Now, as $Q^{0}$ is
$g^{1}$-parallel and $g^{1}$ is timelike complete, the flow of $Q^{0}$ is
complete, any integral function of $Q^{0}$ and the flow of $Q^{0}$ define an
isometry between $(M,g_{1})$ and $(\mathbb{R}\times S,-dt^{2}+h)$ (note that
$S$ is always a level set for any integral function of $Q^{0}$ as $Q^{0}$ is
orthogonal on $S$).

In fact, if $\Phi:\mathbb{R}\times S\rightarrow M$ is the flow of $Q^{0}$,
identifiying $\{0\}\times S$ with $S\subset M$, using that $Q^{0}$ is $g_{1}%
$-timelike, complete and $S$ a Cauchy hypersurface for $g_{1}$, it is clear
that $\Phi$ is a diffeomorphism$.$ Using that $Q^{0}$ is $g_{1}$-parallel it
is clear that $\Phi$ is an isometry. Moreover, it sends $\frac{\partial
}{\partial t}$ to $Q^{0}$.

As the same is true for $g_{0}$ via the function $t_{0}$, the metrics are isometric.

The second assertion follows from the first part and from the observation
above that the whole path connected component of $g_{0}$ consists of causally
complete metrics, see Remark \ref{R 5}.
\end{proof}

\bigskip

It remains as an interesting question to examine the topology of subsets of
globally hyperbolic metrics with holonomy of certain kinds for other manifold
structures on $\mathrm{Lor}(M)$, possibly not coming from a vector space
topology on $\mathrm{Bil}(M)$.

\end{document}